\newtheorem{theorem}{Theorem}[section]
\newtheorem{corollary}[theorem]{Corollary}
\newtheorem{lemma}[theorem]{Lemma}
\newtheorem{problem}[theorem]{Problem}
\newenvironment{proof}[1][Proof]{\noindent\textbf{#1.} }
{\hfill \ \rule{0.5em}{0.5em}}
\title{Regular Tur\'an numbers of complete bipartite graphs}
\author{Michael Tait\thanks{Villanova University Department of Mathematics \& Statistics. Research is partially supported by the National Science Foundation grant DMS-2011553. email: \texttt{michael.tait@villanova.edu}} \and Craig Timmons\thanks{Department of Mathematics and Statistics, 
	California State University Sacramento, \texttt{craig.timmons@csus.edu}.
Research supported in part by Simons Foundation Grant \#359419.}}
\begin{document}
\maketitle

\begin{abstract}
    Let $\textup{rex}(n, F)$ denote the maximum number of edges in an $n$-vertex graph that is regular and does not contain $F$ as a subgraph. We give lower bounds on
    $\textup{rex}(n, F)$, 
    that are best possible up to a constant factor, when $F$ is one of $C_4$, $K_{2,t}$, $K_{3,3}$ or $K_{s,t}$ for $t>s!$. 
\end{abstract}

\section{Introduction}
For a fixed graph $F$, the {\em Tur\'an number} of $F$ is the maximum number of edges in an $n$-vertex graph that does not contain $F$ as a subgraph and is denoted by $\textup{ex}(n, F)$. Tur\'an numbers for various graphs or families of graphs are the central functions in extremal graph theory. In this paper, we study a related function, where one restricts to regular graphs.

Let $\textup{rex}(n, F)$ be the maximum number of edges
in an $n$-vertex regular $F$-free graph. 
Following \cite{CT} and \cite{worm}, we call this the {\em regular Tur\'an number} of $F$. By the definitions, we have the trivial inequality
\[
\textup{rex}(n, F) \leq \textup{ex}(n, F),
\]
for all $F$ and $n$. However, unlike $\textup{ex}(n, F)$, the function $\textup{rex}(n, F)$ is not necessarily monotone in $n$. For example, Mantel's theorem shows that $\textup{rex}(2k, K_3) = k^2$, but Andr\'asfai \cite{A} proved that $\textup{rex}(2k+1, K_3) \leq (2k+1)^2/5$.

Most of the previous work on regular Tur\'an numbers is given by the extensive study of {\em cages} (see \cite{EJ} for a survey), where one forbids all cycles up to a fixed length. For other graphs, regular Tur\'an numbers were introduced and studied systematically in \cite{worm}. The regular Tur\'{a}n problem
was motivated by Caro and Tuza's work on singular Tur\'{a}n numbers \cite{singular}.
In \cite{worm}, 
Gerbner, Patk\'{o}s, Tuza, and Vizer 
showed that for non-bipartite $F$ with odd girth $g$, one has $\textup{rex}(n, F) \geq n^2/(g+6) - O(n)$ and asked to determine $\liminf_{n\to \infty} \textup{rex}(n, F)/n^2$ for non-bipartite $F$. This problem was solved independently in \cite{CT} and \cite{CJK} for graphs $F$ with chromatic number at least $4$, and the authors proved partial results for graphs with chromatic number $3$. 
Following \cite{CT} and \cite{CJK}, exact 
results on regular Tur\'{a}n numbers of trees and complete graphs
were obtained in \cite{exact}.

In both \cite{CT} and \cite{worm} it is acknowledged that we do not know much about $\mathrm{rex}(n, F)$ when $F$ is a bipartite graph with a cycle.  This will be the focus of the current paper.  

Given a bipartite graph $F$, the quantity we will be particularly interested in is 
\[
Q(F):=\limsup_{n\to \infty} \frac{\textup{ex}(n, F)}{\textup{rex}(n, F)}
\]
which is a measure of how close $\textup{rex}(n , F)$ is
to $\textup{ex}(n ,F)$.  
By the trivial inequality, $Q(F)$ is always at least $1$. It is natural to ask when $Q(F) = 1$ and when $Q(F) < \infty$. Our main theorems give lower bounds for $\textup{rex}(n, F)$, implying that $Q(F)$ is finite for several different bipartite graphs.  We begin with 
$C_4$.

\begin{theorem}\label{main theorem c4}
The regular Tur\'{a}n number of $C_4$ satisfies
\[
\textup{rex}(n , C_4) \geq \left( \frac{1}{ 2\sqrt{6} } -o(1) \right) n^{3/2}.
\]
\end{theorem}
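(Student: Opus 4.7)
My plan is to construct an explicit regular $C_4$-free graph using incidence graphs of finite projective planes. For each prime $p$, let $I(p)$ denote the point-line incidence graph of $PG(2,p)$: this is a bipartite, $(p+1)$-regular, $C_4$-free graph (in fact of girth $6$) on $2(p^2+p+1)$ vertices with $(p+1)(p^2+p+1)$ edges. Since the disjoint union of graphs of common degree preserves both regularity and $C_4$-freeness, $k$ disjoint copies of $I(p)$ yield a $(p+1)$-regular $C_4$-free graph with $N(k,p) := 2k(p^2+p+1)$ vertices and $e(k,p) := k(p+1)(p^2+p+1)$ edges.

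A direct computation gives $e(k,p) \sim N(k,p)^{3/2}/(2\sqrt{2k})$ as $p \to \infty$, which equals $N(k,p)^{3/2}/(2\sqrt{6})$ precisely when $k = 3$. So for any $n$ of the form $6(p^2+p+1)$ with $p$ prime, three disjoint copies of $I(p)$ already achieve the desired bound. For a general $n$, I would choose an integer $k \in \{1, 2, 3\}$ and a prime $p$ such that $N(k,p)$ is as close to $n$ as possible. By the prime number theorem, for every $\varepsilon > 0$ there is a prime in any interval $[m, (1+\varepsilon) m]$ for $m$ large, so primes near $\sqrt{n/6}$ are spaced with $o(1)$ relative gap. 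Consequently the set $\{N(k,p) : 1 \leq k \leq 3, \; p \text{ prime}\}$ approximates every large $n$ within additive error $o(n)$, and the edge count of the base construction is already $(1/(2\sqrt 6) - o(1)) n^{3/2}$.

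The main obstacle will be matching the vertex count $n$ exactly, which regularity forces since one cannot pad with isolated vertices. Moreover, the Moore-type inequality $n \geq (p+1)^2 + 1$ for any $(p+1)$-regular $C_4$-free graph rules out absorbing a small remainder by attaching an auxiliary $(p+1)$-regular $C_4$-free component of size $o(p^2)$. I expect the proof to resolve this through one of two routes: either a number-theoretic argument showing that every sufficiently large $n$ admits a representation $n = 2k(p^2+p+1)$ with $k \leq 3$ and prime $p$ of the required order (using primes in short intervals); or an enlargement of the family of building blocks -- for instance, by modifying the Erd\H{o}s-R\'enyi polarity graph into a $(p+1)$-regular $C_4$-free graph on close to $p^2 + p + 1$ vertices via a careful matching on the absolute points -- so that every vertex count can be reached with only an $o(n^{3/2})$ loss in edges, preserving the constant $1/(2\sqrt 6)$ in the limit.
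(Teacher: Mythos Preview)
Your plan correctly identifies the target constant and the overall ``disjoint union of $C_4$-free regular blocks'' strategy, but it is not a proof: you explicitly leave the main obstacle unresolved, and neither of your two suggested routes works as stated. The first route fails outright, since $2k(p^2+p+1)$ is always even, so no odd $n$ is representable; even among even $n$, the set $\{2k(p^2+p+1): 1\le k\le 3,\ p\text{ prime}\}$ has density zero and misses almost every integer. The second route is also insufficient: even if you could add a matching on the absolute points of $ER_p$ to obtain a $(p+1)$-regular $C_4$-free graph on $p^2+p+1$ vertices (which is delicate, since a new edge $ab$ creates a $C_4$ whenever there is a path of length~$3$ between $a$ and $b$), you would still only have blocks of sizes $p^2+p+1$ and $2(p^2+p+1)$, all of degree $p+1$. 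For a disjoint union to be regular, all blocks must share the same degree and hence the same $p$, so the attainable vertex counts are multiples of $p^2+p+1$---again missing almost every $n$.

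The paper resolves this by using building blocks whose size and degree are \emph{decoupled}. The key lemma constructs, for any $M$ and any $k$ up to roughly $\sqrt{M/2}$, a $k$-regular $M\times M$ bipartite $C_4$-free graph: one embeds a Bose--Chowla Sidon set of size $p\approx\sqrt{M/2}$ into $\{1,\dots,\lfloor M/2\rfloor\}$, picks any $k$-element subset $A$, and forms the bipartite Cayley sum graph on $\mathbb{Z}_M\times\mathbb{Z}_M$ with connection set $A$. This already handles every even $n$ (with a constant better than $1/(2\sqrt6)$). For odd $n$, the paper uses Parsons' induced subgraphs $R_{1,p}$, $R_{2,p}$ of $ER_p$, which are $\tfrac{p-1}{2}$-regular $C_4$-free graphs on $\binom{p+1}{2}$ or $\binom{p}{2}$ vertices; choosing $p\approx\sqrt{(2/3)n}$ and the appropriate $R_{i,p}$ leaves an even remainder $N$, which is filled by a $\tfrac{p-1}{2}$-regular bipartite Cayley graph on $N$ vertices from the lemma. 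The balancing condition that makes this work---namely that $\tfrac{p-1}{2}$ not exceed the Cayley graph's degree cap $\approx\sqrt{N/4}$---forces $p\approx\sqrt{2n/3}$, and the resulting degree $\tfrac{p-1}{2}\approx\sqrt{n/6}$ is exactly what yields the constant $1/(2\sqrt6)$. So the constant you computed is correct, but it arises for a different reason than ``three copies of $I(p)$.''
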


F\"uredi \cite{F} showed that $\mathrm{ex}(K_{2, t+1}) \sim \frac{\sqrt{t}}{2}n^{3/2}$. Since a $C_4$-free graph is also $K_{2,t+1}$-free, Theorem \ref{main theorem c4} shows that for any fixed $t$, $Q(K_{2,t+1})$ is bounded above by a constant that depends on $t$,
namely $\sqrt{t/6}$. However, just using $C_4$-free graphs does not show that $\limsup_{t\to\infty} Q(K_{2,t+1})$ is finite. We show that this limit is finite in the following theorem.

\begin{theorem}\label{main theorem k2t}
For $t \geq 1$ with $t$ even, 
the regular Tur\'{a}n number of $K_{2,2t+1}$ satisfies
\[
\textup{rex}(n , 
K_{2,2t+1}) \geq \left( \sqrt{t / 20 } -o(1) \right) n^{3/2}.
\]
\end{theorem}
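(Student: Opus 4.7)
The plan is to regularize a $K_{2,t+1}$-free algebraic base graph by taking five disjoint copies and gluing them with a $K_5$ on each orbit of copies; the choice of five copies is what tunes the leading coefficient to $\sqrt{t/20}$.

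First I set up the base graph. For $t$ even and $n$ large, a prime $q \equiv 1 \pmod{t}$ with $q = (1-o(1))\sqrt{tn/5}$ is guaranteed by Dirichlet's theorem on primes in arithmetic progressions together with standard prime-gap estimates. F\"uredi's algebraic construction then yields a $q$-regular $K_{2,t+1}$-free graph $G_0$ on $m := (q^2-1)/t$ vertices, with $e(G_0) = qm/2 \sim \tfrac{\sqrt{t}}{2}\, m^{3/2}$ and $5m \le n$. I form the final graph $G$ by taking five disjoint copies $G_0^{(1)},\dots,G_0^{(5)}$ of $G_0$, identifying vertex labels so each $v \in V(G_0)$ has copies $v^{(1)},\dots,v^{(5)}$, and, for every $v$, adding the $\binom{5}{2}$ edges of the $K_5$ on those copies. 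The resulting $G$ is $(q+4)$-regular on $5m$ vertices.

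To verify $K_{2,2t+1}$-freeness I bound the codegree of a pair $u = v^{(i)}$, $u' = w^{(j)}$. If $v = w$ and $i \neq j$, the common neighbors are exactly the three copies $v^{(k)}$ with $k \notin \{i,j\}$, giving codegree $3$. If $v \neq w$ and $i = j$, the common neighbors lie in $G_0^{(i)}$ and number at most $t$ by $K_{2,t+1}$-freeness of $G_0$. If $v \neq w$ and $i \neq j$, only $w^{(i)}$ and $v^{(j)}$ are candidates, each present exactly when $v \sim w$ in $G_0$, so the codegree is at most $2$. The maximum codegree is thus $\max(t,3) \le 2t$ for every even $t \ge 2$, so $G$ contains no $K_{2,2t+1}$.

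Counting edges,
\[
e(G) = \tfrac{5qm}{2} + 10m \sim \tfrac{5\sqrt{t}}{2}\, m^{3/2} = \tfrac{5\sqrt{t}}{2}\,(n/5)^{3/2} = \sqrt{t/20}\; n^{3/2}.
\]
For arbitrary $n$, take $q$ maximal with $5m \le n$; prime-gap bounds give $n - 5m = o(n)$, and the leftover vertices are filled by any $(q+4)$-regular $K_{2,2t+1}$-free graph on $n-5m$ vertices, contributing only lower-order terms. The delicate point is the codegree-$3$ subcase, which forces $t \ge 2$ and, combined with the availability of primes $q \equiv 1 \pmod t$ needed for a genuinely $q$-regular F\"uredi graph, underlies the ``$t$ even'' hypothesis in the statement.
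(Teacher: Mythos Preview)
Your $K_5$-blowup idea is neat and the codegree analysis is correct, but the proof has a genuine gap at the ``leftover vertices'' step, and this step is precisely where the difficulty of the problem lies.

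You write that ``the leftover vertices are filled by any $(q+4)$-regular $K_{2,2t+1}$-free graph on $n-5m$ vertices.'' This is circular: the existence of such a graph is not established, and in many cases it is outright impossible. Note that $q$ is an odd prime, so $q+4$ is odd; hence a $(q+4)$-regular graph on $N$ vertices requires $N$ even. There is no mechanism in your argument forcing $n-5m$ to be even. Worse, $n-5m$ could be smaller than $q+5$ (there is nothing preventing $n$ from being, say, $5m+7$), in which case no $(q+4)$-regular graph on $n-5m$ vertices exists at all. Since $\textup{rex}(n,F)$ is not monotone in $n$, you cannot simply discard the leftover vertices or pad with isolated points; you must produce a regular graph on \emph{exactly} $n$ vertices, and your construction only does this when $n$ happens to equal $5m$.

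A secondary issue: F\"uredi's construction is not $q$-regular as stated; like the paper's graph $H_{p,t}$, it has absolute points of degree one less. This is fixable (e.g.\ by using a regularized variant, which is then only $K_{2,2t+1}$-free, but your codegree analysis still goes through), so it is not the fatal problem. The paper's proof confronts the ``exactly $n$ vertices'' issue head-on: for even $n$ it builds a $k$-regular bipartite Cayley sum graph on $\mathbb{Z}_M \times \mathbb{Z}_M$ with $2M=n$, where $k$ can be any value up to roughly $\sqrt{tM/2}$; for odd $n$ it selects a prime $p$ in a carefully chosen residue class so that the regular graph $H_{p,t}^*$ has an odd number of vertices, and then fills the remaining even number of vertices with the bipartite Cayley graph tuned to the same degree $p-1$. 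The flexibility to hit any target degree on any even vertex count is what makes the argument close, and your proposal has no analogue of this.
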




Using the $K_{3,3}$-graphs constructed 
by Brown \cite{brown}, together 
with some number theoretic results 
on the Waring-Goldbach problem, we
can prove a lower bound on 
the regular Tur\'{a}n number of $K_{3,3}$.  

\begin{theorem}\label{main theorem k33}
For large enough $n$, the regular Tur\'{a}n number of $K_{3,3}$ satisfies
\[
\textup{rex}(n , K_{3,3}) 
\geq 
 \frac{1}{2 \sqrt[3]{14^2} } n^{5/3} - O(n^{3/5}).
 \]
\end{theorem}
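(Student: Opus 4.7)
The plan is to adapt Brown's construction of $K_{3,3}$-free graphs into a regular form and extend it to all large $n$ by number-theoretic tuning of the parameters. Recall that for each odd prime $p$, Brown's graph lives on $\mathbb{F}_p^3$ with two points adjacent iff their squared distance equals a fixed nonzero value; it is $K_{3,3}$-free, has $p^3$ vertices, is nearly $p^2$-regular, and carries $\sim \tfrac{1}{2} p^5$ edges. The first step is to massage this graph into an exactly $d$-regular $K_{3,3}$-free graph $H_p$ on $p^3 - O(p^2)$ vertices with $d = p^2 - O(p)$, by deleting the (few) vertices of anomalous degree and trimming a bounded number of edges at each surviving vertex; the translation symmetry of $\mathbb{F}_p^3$ ensures the defect is spread uniformly and costs only lower-order terms in the edge count.

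The target constant $\frac{1}{2 \cdot 14^{2/3}}$ arises from taking $m = 14$ disjoint copies of $H_p$: the union is $d$-regular, $K_{3,3}$-free, has $\sim 14 p^3$ vertices, and $\sim 7 p^5 \sim n^{5/3}/(2 \cdot 14^{2/3})$ edges. To realize this for arbitrary large $n$, I would select a prime $p$ so that $14 p^3$ lies as close to $n$ as possible; this is the number-theoretic heart of the argument, invoking a Waring--Goldbach-type estimate (on primes $p$ with $14p^3$ in a short interval around $n$, possibly subject to mild congruence conditions) to guarantee $|14 p^3 - n| = O(n^{3/5})$ for all sufficiently large $n$. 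The leftover $R := n - 14 p^3$ vertices are then absorbed either by attaching a small $d$-regular $K_{3,3}$-free patch when $R$ is large enough, or by performing local surgery between the fourteen copies when $R$ is very small; either way, the edge loss is within the $O(n^{3/5})$ error.

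I expect the principal obstacle to be maintaining exact $d$-regularity on exactly $n$ vertices without creating a $K_{3,3}$. The regime where $R$ is smaller than the degree $d \sim n^{2/3}$ is the awkward one, since no independent $d$-regular graph can sit on $R$ vertices; the extras must be spliced into the fourteen copies of $H_p$, and each such splice must be shown not to produce three mutually common neighbours on either side. A secondary difficulty is that the Waring--Goldbach input typically requires $n$ to avoid finitely many residue classes; these exceptional classes likely need to be covered by running the same argument with $m = 14 \pm O(1)$ copies and a slightly different prime $p$, and verifying that at least one choice matches the residue of $n$ while preserving the leading constant up to the stated error.
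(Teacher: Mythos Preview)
Your outline contains a real gap, and it is exactly the one you flag yourself: the leftover $R = n - 14p^3$ vertices. When $R$ is smaller than the degree $d \sim (n/14)^{2/3}$ there is no freestanding $d$-regular patch on $R$ vertices, and ``local surgery'' on the fourteen copies is not a proof. You would need to add or delete $\Theta(R\,d)$ incidences in a way that keeps every vertex at exactly degree $d$ and creates no $K_{3,3}$; nothing in your sketch indicates how to do this, and in fact it is hard to see how one could without essentially reproving a much stronger structural statement about Brown's graph. The paper avoids this difficulty entirely, and that avoidance is the main idea you are missing.

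The paper's route is this. First, Brown's graph $B(p,\alpha)$ with $\eta(\alpha)=-\eta(-1)$ is already \emph{exactly} $(p^2-p)$-regular on $p^3$ vertices, so no ``massaging'' step is needed. Second, and crucially, the Waring--Goldbach input is used not to place a single prime cube near $n/14$, but to write $n$ \emph{exactly} as $p_1^3+\cdots+p_k^3$ with $k\in\{13,14\}$ and all $p_i$ within $n^{4/15+\epsilon}$ of $(n/k)^{1/3}$; this is the Wei--Wooley almost-equal-primes theorem. The disjoint union of the $B(p_i,\alpha_i)$ then has exactly $n$ vertices, so there is no leftover at all. The components have slightly different (even) degrees $p_i^2-p_i$, and Petersen's $2$-factor theorem lets one peel off $2$-factors from each component until every component has the minimum degree $p_k^2-p_k$. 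The discrepancy $|(p_i^2-p_i)-(p_k^2-p_k)|$ is $O(n^{3/5+\epsilon})$, which is where the error term in the statement comes from.

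So the two points to correct are: (i) do not use fourteen copies of the same prime and then try to patch --- instead use distinct almost-equal primes whose cubes sum to exactly $n$; and (ii) equalize degrees across components by removing $2$-factors (Petersen), not by ad hoc vertex deletion. What you are calling a ``Waring--Goldbach-type estimate'' for a single prime in a short interval is really just a prime-gap statement and does not do the job here; the genuine Waring--Goldbach theorem with almost-equal primes is the engine of the proof.
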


One comment is that if $n$ is of the 
form $n = p^3$ where $p$ is an odd 
prime, then 
$\textup{rex}(n , K_{3,3} ) \geq \frac{n^{5/3} - n^{4/3} }{2}$ which is asymptotically best
possible since $\textup{ex}(n , K_{3,3} ) \sim \frac{1}{2} n^{5/3}$.  Therefore, 
\[
\liminf_{n \rightarrow \infty} 
\frac{ \textup{ex}(n , K_{3,3} ) }{ \textup{rex}(n , K_{3,3} )} =1.
\]

Finally, we use the norm graphs of Koll\'ar, R\'onyai, and Szab\'o \cite{KRS} to give lower bounds on the regular Tur\'an number of $K_{s,t}$ when $t > s!$.

\begin{theorem}\label{main theorem kst}
Let $s\geq 3$ and $t>s!$. Then there is a constant $C$ depending only on $s$ so that for large enough $n$, the regular Tur\'{a}n number of $K_{s,t}$ satisfies
\[
\textup{rex}(n, K_{s,t}) \geq Cn^{2-1/s}.
\]
\end{theorem}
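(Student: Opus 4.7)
The building block is the Kollár--Rónyai--Szabó (KRS) norm graph $\Gamma_q$, with vertex set $\mathbb{F}_{q^s}$ and edges $\{\{a,b\}:N(a+b)=1\}$, where $N:\mathbb{F}_{q^s}\to\mathbb{F}_q$ is the field norm. By the KRS theorem, $\Gamma_q$ is $K_{s,s!+1}$-free, hence $K_{s,t}$-free whenever $t>s!$. A direct fibre count shows that each vertex $b$ has exactly $(q^s-1)/(q-1)$ neighbours, up to a self-neighbour correction $N(2b)=1$ which cannot occur in characteristic two. Choosing $q$ to be a power of $2$ therefore produces a $d$-regular graph with $d=(q^s-1)/(q-1)$ on $q^s$ vertices, containing $\tfrac{1}{2}q^s d=\Theta(q^{2s-1})$ edges.

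For a given large $n$, I would pick $q=\Theta(n^{1/s})$ a power of $2$ and modify $\Gamma_q$ to sit on exactly $n$ vertices while remaining $d$-regular and $K_{s,t}$-free. Two regularity-preserving operations are available. The first is the matching blowup: replacing every vertex of a $d$-regular $K_{s,t}$-free graph with $v$ copies and every edge with a perfect matching between the copy classes yields a $d$-regular graph on $v\cdot q^s$ vertices; a straightforward projection argument shows that any $K_{s,t}$ in the blowup descends to a $K_{s,t}$ in the base, so $K_{s,t}$-freeness persists. The second is disjoint union. Together these reach any vertex count of the form $v\cdot q^s$; the remaining small gap $r:=n-v q^s<q^s$ is closed by a local vertex-addition step in which each new vertex $w$ is inserted by choosing a matching $M$ of size $d/2$ in the current graph, deleting $M$, and joining $w$ to every endpoint of $M$. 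Each endpoint of $M$ swaps its $M$-partner for $w$ and keeps degree $d$, while $w$ itself attains degree $d$.

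The main obstacle is preserving $K_{s,t}$-freeness through the vertex-addition step, especially in the extremal case $t=s!+1$ where the KRS bound leaves no slack. To gain room I would pass to the Alon--R\'onyai--Szab\'o projective norm graph, which is $K_{s,(s-1)!+1}$-free; every $s$-subset of $V(M)$ then has at most $(s-1)!$ common neighbours in the current graph, and adding $w$ contributes at most one more, for a total of $(s-1)!+1\leq s!<t$ on the $t$-side of a would-be $K_{s,t}$. The $s$-side case, where $w$ sits alongside $s-1$ old vertices with $\geq t$ common neighbours inside $V(M)$, is handled by a random or greedy choice of $M$, using that the intersection $N(v_1)\cap\cdots\cap N(v_{s-1})$ has size only $O(q)$ and its intersection with a random $V(M)$ of size $d=\Theta(q^{s-1})$ concentrates around a small constant. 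The number $r$ of additions is kept small by density of prime powers near $n^{1/s}$ (e.g.\ Bertrand-type bounds on prime gaps, or a Waring--Goldbach decomposition of $n$ as a sum of $s$-th powers), with a single matching repairing the few odd-characteristic self-loops if $q$ is not a power of $2$. The final graph is $d$-regular, $K_{s,t}$-free, on exactly $n$ vertices, with $\tfrac{nd}{2}=\Theta(n^{2-1/s})$ edges, giving $\textup{rex}(n,K_{s,t})\geq Cn^{2-1/s}$ for some $C=C(s)>0$.
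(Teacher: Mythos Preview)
Your building blocks are sound---the norm graph is $K_{s,t}$-free, and the matching blowup preserves both regularity and $K_{s,t}$-freeness---but the vertex-addition step has a genuine gap. To obtain degree $d=\Theta(n^{1-1/s})$ you must take $q=\Theta(n^{1/s})$, hence $q^s=\Theta(n)$ and $v=O(1)$; the remainder $r=n-vq^s$ is then not ``small'' but of order $n^{1-c/s}$ even with the sharpest prime-gap bounds, and your parenthetical alternative (Waring--Goldbach) produces a sum $n=\sum p_i^s$ of \emph{distinct} $s$-th powers, which lies outside your $vq^s+r$ framework and reintroduces exactly the problem of components with unequal degrees. More seriously, even a single addition is not justified. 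When the new vertex $w$ lies on the $s$-side you need $|N(v_1)\cap\cdots\cap N(v_{s-1})\cap V(M)|<t$ for \emph{every} $(s-1)$-tuple $(v_1,\ldots,v_{s-1})$. The common neighbourhood has size $\Theta(q)$, $|V(M)|=\Theta(q^{s-1})$, and the ambient set has size $q^s$, so for a random $V(M)$ the intersection is approximately Poisson with mean~$1$, and the probability that it reaches $t$ is a positive constant independent of $q$; a union bound over the $q^{\,s(s-1)}$ tuples therefore fails, and you give no other mechanism for choosing $M$. Iterating $r=n^{1-o(1)}$ times through a graph that is no longer a norm graph only compounds the difficulty, since neither the $K_{s,(s-1)!+1}$-freeness used on the $t$-side nor the $(s-1)$-wise codegree bound used on the $s$-side survives the surgery.

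The paper sidesteps all of this. It writes $n=\sum_{i=1}^{\ell}p_i^s$ exactly via Waring--Goldbach with almost-equal primes and $\ell=O_s(1)$, takes the disjoint union of the $\ell$ norm graphs, and equalises their degrees by peeling off Hamilton cycles, whose existence follows from a spectral-gap estimate for the norm graph (proved via Gauss sums) combined with the Butler--Chung Hamiltonicity criterion. The residual absolute points are then repaired by a single matching whose edges each cross between two components. Because any potential $K_{s,t}$ using such an edge would require at least $s-1\geq 2$ vertices of one part and $t-1\geq 2$ of the other to be joined across components by a matching, $K_{s,t}$-freeness follows in two lines; no per-vertex surgery inside a component is needed.
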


We end the introduction with two open problems that we feel are most natural to try next.

\begin{problem}
Show that $Q(C_6)$ and $Q(C_{10})$ are finite.
\end{problem}

\begin{problem}
Determine whether or not $Q(C_4) =1$.
\end{problem}

In Section \ref{preliminaries}, we give an outline of how the main theorems are proved and establish some necessary lemmas. In Sections \ref{c4 section}, \ref{k2t section}, \ref{k33 section}, and \ref{kst section} we prove our main theorems.

\section{Preliminaries}\label{preliminaries}
For $F\in \{C_4, K_{2,t}, K_{3,3}, K_{s,t}\}$ with $t>s!$, there are classical constructions of $F$-free graphs with many edges coming from geometry and algebra. To give an upper bound on the quantity
\[
Q(F) = \limsup_{n\to \infty} \frac{\textup{ex}(n, F)}{\textup{rex}(n, F)},
\]
we will give constructions of regular graphs on $n$ vertices that are $F$-free and have many edges. The first difficulty is that to bound the limit superior, we need a lower bound on $\textup{rex}(n, F)$ for arbitrary $n$. Since the function $\textup{rex}(n, F)$ is not monotone in $n$, it does not suffice to construct a sequence of $F$-free graphs which have number of vertices some function of a prime and then use a density of primes argument, as is common when proving lower bounds for $\mathrm{ex}(n, F)$. 

Therefore, given an $n$, our strategy will be to take disjoint unions of $F$-free graphs so that the number of vertices sums to $n$. As the classical constructions of $F$-free graphs are defined algebraically or geometrically, the number of vertices in these graphs is some function of a prime power (see \cite{ADG survey} for a survey on algebraically defined graphs). Because of this, we will need the following theorem on the Waring-Goldbach problem when the prime powers are restricted to being almost equal.

\begin{theorem}[Wei and Wooley \cite{WW}]\label{prime basis theorem}
Let $k\geq 3$ be a natural number and let $\theta_k = \frac{4}{5}$ if $k =3$ and $\frac{5}{6}$ if $k\geq 4$. For a prime $p$, let $\tau = \tau(k,p)$ be the integer such that $p^\tau | k$ but $p^{\tau+1} \not| k$. Define $\gamma = \gamma(k,p)$ by $\gamma(k,p) = \tau+2$ when $p=2$ and $\tau > 0$, and otherwise $\gamma(k,p) = \tau+1$. Let $R = R(k) = \prod p^\gamma$where the product is taken over all primes with $(p-1)|k$. Then for any $\varepsilon > 0$ and $\ell > 2k(k-1)$, if $n$ is a sufficiently large integer congruent to $\ell$ mod $R$, the equation 
\[
n = p_1^k + p_2^k + \cdots + p_\ell^k
\]
has a solution in prime numbers $p_j$ with $|p_j - (n/\ell)^{1/k}| < \left((n/\ell)^{1/k})\right)^{\theta_k + \varepsilon}$.
\end{theorem}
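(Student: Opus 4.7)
The plan is to attack this via the Hardy--Littlewood circle method, adapted to short intervals of primes. Set $P = (n/\ell)^{1/k}$ and $H = P^{\theta_k+\varepsilon}$, and let $I = (P-H, P+H]$ be the short interval around $P$. Define the exponential sum
\[
f(\alpha) = \sum_{p \in I} (\log p)\, e(\alpha p^k).
\]
Then the number of representations of $n$ as $p_1^k+\cdots+p_\ell^k$ with each $p_j \in I$, weighted by $\prod \log p_j$, equals
\[
r(n) = \int_0^1 f(\alpha)^\ell e(-\alpha n)\, d\alpha.
\]
The goal is to show $r(n) > 0$ whenever $n \equiv \ell \pmod{R}$ and $n$ is sufficiently large, from which a prime-power representation of the stated form follows by removing the $\log$ weights.

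Next I would perform the standard Farey dissection of $[0,1]$ into major arcs $\mathfrak{M}$ (neighborhoods of rationals $a/q$ with small denominator) and minor arcs $\mathfrak{m}$. On the major arcs I would establish the asymptotic formula
\[
\int_{\mathfrak{M}} f(\alpha)^\ell e(-\alpha n)\, d\alpha \;\sim\; \mathfrak{S}(n)\, J(n),
\]
where $\mathfrak{S}(n) = \sum_{q=1}^\infty \sum_{(a,q)=1} (q^{-1}\sum_{\chi \bmod q}\cdots)^\ell e(-an/q)$ is the singular series and $J(n) \asymp H^{\ell-1} P^{k-\ell k}$ is the singular integral for the short interval. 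The number $R = \prod_{(p-1)\mid k} p^{\gamma(k,p)}$ is designed precisely so that each local factor of $\mathfrak{S}(n)$ is bounded below by a positive constant exactly when $n \equiv \ell \pmod R$; verifying this requires analyzing the $p$-adic density of solutions to $n \equiv x_1^k + \cdots + x_\ell^k \pmod{p^j}$ with each $x_i$ coprime to $p$, for each prime $p$ with $(p-1) \mid k$ (the "exceptional" primes for the Waring--Goldbach problem).

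The main obstacle is the minor-arc bound. Here the key is a sharp estimate of the shape
\[
\int_{\mathfrak{m}} |f(\alpha)|^\ell\, d\alpha \;=\; o\!\bigl(H^{\ell-1} P^{k-\ell k}\bigr)
\]
whenever $\ell > 2k(k-1)$. The delicacy is that $f$ is a sum over primes restricted to a very short interval of length $H = P^{\theta_k+\varepsilon}$, so classical Weyl or Vinogradov bounds over $[1,P]$ are insufficient. One proceeds via Vaughan's identity (or Heath--Brown's identity) to reduce $f$ to bilinear sums of "type~I" and "type~II" form, and then bounds those using a short-interval mean value estimate for Vinogradov's system --- precisely the kind of estimate provided by Wooley's efficient congruencing / nested efficient congruencing machinery, which for $k \geq 4$ yields sharp control on $\int_0^1 |g(\alpha)|^{2s}\,d\alpha$ for $g(\alpha) = \sum_{x \in J} e(\alpha x^k)$ over short intervals $J$ of length $P^{5/6+\varepsilon}$; for $k=3$ an additional use of Weyl differencing together with sieve-type arguments recovers the cleaner threshold $\theta_3 = 4/5$. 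These mean-value inputs are what pin down the numerical values of $\theta_k$ and the admissible number of variables $\ell > 2k(k-1)$.

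Finally, combining the two arc estimates, the major-arc main term $\mathfrak{S}(n) J(n) \asymp H^{\ell-1} P^{k-\ell k}$ dominates the minor-arc contribution, so $r(n) \gg H^{\ell-1} P^{k - \ell k} > 0$. In particular at least one representation $n = p_1^k + \cdots + p_\ell^k$ exists with each $p_j \in I$, i.e.\ with $|p_j - P| < P^{\theta_k+\varepsilon}$, which is the conclusion sought. The hardest and most novel part of the argument is the minor-arc step: all the analytic content of the Wei--Wooley refinement sits in the short-interval mean value estimate, while the major-arc and singular-series analysis follow the classical template for the Waring--Goldbach problem with only notational modifications to accommodate the interval $I$.
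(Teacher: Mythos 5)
The first thing to note is that the paper does not prove this statement at all: it is quoted (with attribution) from Wei and Wooley's paper on sums of powers of almost equal primes and is used as a black box, so there is no ``paper's own proof'' to compare against. Measured against the argument in the cited reference, your sketch does identify the correct architecture: a Hardy--Littlewood dissection applied to the prime exponential sum $\sum_{p\in I}(\log p)e(\alpha p^k)$ over the short interval $I=(P-H,P+H]$ with $H=P^{\theta_k+\varepsilon}$; a major-arc analysis whose singular series explains why the congruence condition $n\equiv \ell \pmod R$, with $R$ built from the primes $p$ satisfying $(p-1)\mid k$, is exactly the local solubility condition for the Waring--Goldbach problem; and a minor-arc mean value estimate as the crux.

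As a proof, however, the proposal has a genuine gap: every step that is actually hard is asserted rather than established. The minor-arc bound for $\ell>2k(k-1)$ \emph{is} the content of the Wei--Wooley paper, occupying the bulk of its thirty-odd pages; invoking ``Vaughan's identity'' and ``efficient congruencing machinery'' does not derive the specific exponents $\theta_3=4/5$ and $\theta_k=5/6$, nor the admissible variable count $2k(k-1)$, and no estimate is actually carried out. There is also a quantitative slip that would derail the endgame if taken literally: the expected main term for the weighted count of representations with all $p_j\in I$ has order $H^{\ell-1}P^{1-k}$ (heuristically, about $H^{\ell}$ tuples whose sums spread over an interval of length $\asymp P^{k-1}H$), not $H^{\ell-1}P^{k-\ell k}$ as written, and the minor-arc target must be calibrated against the correct main term. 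For the purposes of the present paper, citing the theorem is the right move; reproving it is a research project in analytic number theory, not a step one can reconstruct from an outline.
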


The next difficulty we encounter is that when we take a disjoint union of $F$-free graphs, each component may not be regular of the same degree (indeed, each component may itself not be regular, but we ignore this for the moment). To make the vertices have the same degree, we will remove a regular subgraph from each component.  In the case of $K_{3,3}$, 
each graph in the disjoint union will be regular with all vertices having even degree, and so by Petersen's $2$-factor theorem the edges of each component can be partitioned into $2$-factors. Therefore, we may remove a spanning regular subgraph of the appropriate degree from each component in order to make the whole graph regular.

The argument for $K_{s,t}$ with 
$t > s!$ is more involved because, as mentioned earlier, each component will not be regular. We use Theorem \ref{prime basis theorem} to take a disjoint union of norm graphs, which are almost regular but have absolute points of degree $1$ fewer than the rest. We would like to remove a spanning regular subgraph of the appropriate degree from each component so that all of the absolute points in the graph have the same degree and all of the other points have degree $1$ more. Then we add a matching to the absolute points in a way that preserves $K_{s,t}$-freeness, making the whole graph regular. To do this we will iteratively remove Hamilton cycles from the graph. Removing $2$-factors iteratively would be just as good for our purpose, but we explain below why we are doing something seemingly much harder.

Much of the previous work on finding $k$-factors in graphs takes place in the setting where the host graph is regular. The main result of \cite{sasha} allows the host graph to contain loops, and so applies in our setting. If we use this theorem, we do not know how many loops were contained in the $k$-factor, and so unfortunately we cannot complete the last step in the proof (adding a matching to the absolute points). We also note that a result of Alon, Freidland, and Kalai \cite{AFK} finds regular subgraphs in almost regular graphs. However, it is crucial for our purpose that these subgraphs be spanning, which is not guaranteed by their theorem. 

To overcome these difficulties, we will use use a spectral approach.  The following result guarantees Hamilton cycles in graphs with a reasonable spectral gap. The combinatorial Laplacian of a graph is the matrix $D-A$ where $D$ is the diagonal degree matrix and $A$ is the adjacency matrix.

\begin{theorem}[Butler and Chung \cite{BC}]\label{hamiltonian theorem}
Let $G$ be a graph on $n$ vertices with average degree $d$ and $ 0 = \mu_1 \leq \mu_1 \leq \cdots \leq \mu_{n-1}$ be the eigenvalues of the combinatorial Laplacian of $G$. There is a constant $c$ so that if 
\[
|d - \mu_i| \leq c\frac{(\log \log n)^2}{\log n(\log \log \log n)}d,
\]
for all $i\not=0$ and $n$ sufficiently large, then $G$ is Hamiltonian.
\end{theorem}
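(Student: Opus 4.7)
The plan is to translate the Laplacian spectral gap into strong combinatorial expansion via a form of the expander mixing lemma, and then to apply Pósa's rotation-extension technique to construct a Hamilton cycle. Writing $\lambda = \max_{i \neq 0}|d - \mu_i|$ for the spectral parameter, a standard quadratic-form computation on $L = D - A$ yields that for any disjoint $S, T \subseteq V(G)$,
$$\left| e(S,T) - \frac{d|S||T|}{n}\right| \le \lambda \sqrt{|S||T|},$$
and that every individual degree lies within $\lambda$ of $d$. Together these show that $G$ is almost $d$-regular and that for any $S$ with $|S| \le n/2$,
$$|N(S) \setminus S| \ge \min\!\left(\tfrac{n}{2},\, \tfrac{d}{2\lambda}\,|S|\right),$$
so $G$ has vertex-expansion factor $\Omega(d/\lambda)$ on small sets.

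With this expansion in hand, I would run Pósa's rotation-extension. Take a longest path $P$ in $G$; by maximality, every neighbor of each endpoint lies on $P$. A single rotation replaces an edge of $P$ through the current endpoint by a chord, producing a path of the same length but with a new endpoint, and the classical Pósa lemma asserts that the set $E$ of endpoints reachable by rotations from a fixed end has $|N(E)| \le 2|E|$. Combined with the $\Omega(d/\lambda)$ expansion just proved, iterating the rotation for $O\bigl(\log n / \log(d/\lambda)\bigr)$ rounds drives $|E|$ up to $\Theta(n)$. Carrying out the same blow-up at the other end and then applying the expander mixing lemma between the two resulting linear-sized endpoint sets produces an edge that closes $P$ into a cycle; since $P$ was a longest path, this cycle must span $V(G)$.

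The quantitative hypothesis $\lambda \le c\, d\, (\log\log n)^{2}/(\log n\, \log\log\log n)$ is calibrated so that $\log(d/\lambda) \gtrsim \log\log n$, and hence $O(\log n / \log\log n)$ rotations suffice to reach a linear-sized endpoint set while leaving a polylogarithmic safety factor for the closing step. The main obstacle is the bookkeeping in the rotation phase: one must ensure that the expansion estimate remains uniform as $|E|$ grows through many dyadic scales, that distinct rotations deliver distinct endpoints without too much collision, and that boundary losses (vertices on $P$ whose rotation would shorten the path, vertices already inside $E$, and so on) remain negligible at each scale. It is precisely the polylogarithmic slack in the spectral hypothesis that absorbs these losses and yields the Hamilton cycle.
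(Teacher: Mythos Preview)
This theorem is not proved in the paper; it is quoted from Butler and Chung \cite{BC} and invoked as a black box in the proof of Theorem~\ref{norm graph subgraphs}. There is therefore no argument in the present paper to compare your attempt against.

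For what it is worth, your outline is the same architecture as in \cite{BC}: convert the Laplacian gap into near-regularity plus an expander-mixing bound, deduce vertex expansion, and run P\'osa rotation--extension to close a longest path into a Hamilton cycle. Two points in your sketch are loose. First, a single application of P\'osa's lemma together with expansion does not by itself force $|E|=\Theta(n)$; P\'osa gives $|N(E)\setminus E|<2|E|$, and combining this with the mixing-lemma bound on $e(E, V\setminus(E\cup N(E)))=0$ yields only a dichotomy (either $|E|$ is linear or $|E|$ lies below a threshold of order $(\lambda n/d)^2/n$). Bridging the gap between the trivial lower bound $|E|\gtrsim d$ and that threshold is where the quantitative hypothesis on $\lambda$ actually bites, and it is carried out not by ``iterating rotations'' but by a double-ended argument in which one rotates from each endpoint in $E$ separately and controls the resulting family of endpoint sets. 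Second, the closing step requires the two endpoint sets to be large enough that the mixing lemma guarantees an edge between them, and since $\lambda$ is only polylogarithmically smaller than $d$ the arithmetic here is tight; this is precisely why the hypothesis has its particular shape. Your sketch has the right skeleton, but the passage from ``expansion factor $\Omega(d/\lambda)$'' to ``$|E|=\Theta(n)$'' is the substantive part and would need the detailed analysis of \cite{BC} to be made rigorous.
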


We note that much smaller spectral gaps guarantee connectivity of a graph, for example Theorem 4.3 of \cite{KSsurvey}. It is possible that one could use this connectivity to verify that the proofs of theorems on finding $2$-factors in regular graphs will hold in our setting with a graph with loops. However, to keep the proof of Theorem \ref{main theorem kst} transparent, we will use Theorem \ref{hamiltonian theorem} of Butler and Chung even though it is perhaps stronger than what we need.

As a final note on the proof, while our result on the spectral gap of norm graphs (Theorem \ref{norm graph spectral gap theorem}) is much stronger than is necessary, we believe that it is interesting in its own right, adding to numerous papers which calculate the eigenvalues of algebraically defined graphs in extremal graph theory (see e.g. \cite{alon2, AR, BIP, CLL, MSW, szabo}).

To compute eigenvalues, we will need the following result on cyclotomic periods. This lemma is known but we provide a proof for completeness.

\begin{lemma}\label{character sum lemma}
Let $q$ be a prime power, $\chi$ an additive character of $\mathbb{F}_q$, and $H$ a multiplicative subgroup
of $\mathbb{F}_q^*$. Then 
\[
\left| \sum_{x\in H} \chi(x) \right| \leq \sqrt{q}.
\]
\end{lemma}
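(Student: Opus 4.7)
The plan is to reduce the character sum over $H$ to a sum of Gauss sums by using orthogonality of multiplicative characters, and then apply the classical evaluation $|G(\psi,\chi)|=\sqrt{q}$. Throughout I will assume $\chi$ is nontrivial (the trivial case is not the intended reading, since otherwise the sum just counts $|H|$). Write $m=|H|$ and $k=(q-1)/m$, and let $\widehat{G}$ denote the set of $k$ multiplicative characters of $\mathbb{F}_q^*$ that are trivial on $H$. By orthogonality of characters of the quotient $\mathbb{F}_q^*/H$, for every $x\in\mathbb{F}_q^*$,
\[
\sum_{\psi\in\widehat{G}} \psi(x) = k\cdot \mathbf{1}_H(x).
\]

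Substituting this into the target sum and swapping the order of summation, I would write
\[
\sum_{x\in H}\chi(x) \;=\; \sum_{x\in\mathbb{F}_q^*} \mathbf{1}_H(x)\chi(x) \;=\; \frac{1}{k}\sum_{\psi\in\widehat{G}} G(\psi,\chi),
\]
where $G(\psi,\chi)=\sum_{x\in\mathbb{F}_q^*}\psi(x)\chi(x)$ is the usual Gauss sum. Now I separate the contribution of the principal character $\psi_0\in\widehat{G}$ from the rest. Since $\chi$ is a nontrivial additive character, $\sum_{x\in\mathbb{F}_q}\chi(x)=0$, so $G(\psi_0,\chi)=-\chi(0)=-1$. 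For any nontrivial multiplicative character $\psi$, the classical evaluation of Gauss sums gives $|G(\psi,\chi)|=\sqrt{q}$.

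Applying the triangle inequality to the $k-1$ nontrivial terms and the single principal term yields
\[
\left|\sum_{x\in H}\chi(x)\right| \;\leq\; \frac{1}{k}\bigl(1+(k-1)\sqrt{q}\bigr) \;=\; \sqrt{q}-\frac{\sqrt{q}-1}{k} \;\leq\; \sqrt{q},
\]
using $\sqrt{q}\geq 1$ in the last step. No step is really an obstacle here; the only subtlety is being careful about the principal multiplicative character, whose Gauss sum is $-1$ rather than $0$ or of size $\sqrt{q}$. Everything else is just assembling the standard orthogonality identity and the Gauss sum bound, both of which are textbook facts about finite fields.
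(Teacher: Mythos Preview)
Your proof is correct and follows essentially the same approach as the paper: both express the indicator of $H$ as an average over the multiplicative characters trivial on $H$, rewrite the sum as an average of Gauss sums, and apply the standard bound $|G(\psi,\chi)|\leq\sqrt{q}$ together with the triangle inequality. Your version is slightly more careful in two respects---you explicitly note that $\chi$ must be nontrivial (otherwise the bound is false), and you separate out the principal multiplicative character to get the sharper intermediate bound $\sqrt{q}-(\sqrt{q}-1)/k$---but the underlying argument is the same.
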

\begin{proof}
Let $\gamma$ generate $\mathbb{F}_q^*$, and let $H$ be generated by $\gamma^h$ where $h|q-1$. Consider the multiplicative character $\theta$ defined by 
\[
\theta(\gamma^k) = \textup{exp}\left(\frac{2\pi i}{h}\cdot k\right).
\]
Then the linear combination $1 + \theta + \theta^2 + \cdots + \theta^{h-1}$ evaluates to $h$ on $H$, and to $0$ on the complement of $H$. Therefore,  
\[
\sum_{x\in H} \chi(x) = \frac{1}{h}\sum_{x\in \mathbb{F}_q^*} (1+\theta + \cdots + \theta^{h-1})(x) \cdot \chi(x) = \frac{1}{h}\sum_{j=0}^{h-1} \sum_{x\in \mathbb{F}_q^*} \theta^j(x)\cdot \chi(x).
\]
Standard theorems on Gauss sums (e.g. Theorem 5.11 in \cite{lidl}) give that each inner sum has modulus bounded by $\sqrt{q}$. Using the triangle inequality completes the proof.
\end{proof}


\section{The regular Tur\'{a}n number of $C_4$}\label{c4 section}
In this section we prove Theorem \ref{main theorem c4}. Given an $n$, if $n$ is even our strategy will be to construct a bipartite Cayley graph that is $C_4$-free. 
This is the easy case.  
If $n$ is odd, we will use a construction from geometry to find a regular $C_4$-free graph on an odd number of vertices.  Then we take this graph and a disjoint union of a bipartite Cayley graph to find a $C_4$-free graph on $n$ vertices. We will choose the size of the generating set of the Cayley graph to ensure that the entire graph is regular. Now for the details.

We begin  with a lemma that is most 
certainly known.  It proves 
that for any large enough integer $M$, there is an 
$M \times M$ $C_4$-free bipartite graph 
that is $k$-regular where $k$ may be 
taken asymptotically as large as $(M/2)^{1/2}$.  

\begin{lemma}\label{bipartite c4 lemma}
There is an integer $n_0$ such that the following holds.  For any integer $M > n_0$, there 
is an $M \times M$ bipartite $C_4$-free graph that is $k$-regular for any $k$ with 
\[
1 \leq k \leq \left( \lfloor M/2 \rfloor + 1 \right)^{1/2} - 
\left( \lfloor M/2 \rfloor +1 \right)^{0.2625}.
\]
\end{lemma}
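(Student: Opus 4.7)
The plan is to build the desired graph as a bipartite Cayley graph $\mathrm{Cay}(\mathbb{Z}_M; S)$: two copies of $\mathbb{Z}_M$ with $(a,b)$ an edge whenever $b - a \in S$. Such a graph is $|S|$-regular on each side, and a $C_4$ in it would give two distinct ordered pairs $(s_1,s_2), (s_3,s_4) \in S\times S$ with $s_1-s_2 = s_3-s_4$, so the graph is $C_4$-free as soon as $S$ is a Sidon set in $\mathbb{Z}_M$. The lemma therefore reduces to producing a Sidon set of size exactly $k$ in $\mathbb{Z}_M$ for every $k$ in the prescribed range.

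To produce such Sidon sets I would invoke Singer's theorem: for every prime $p$ there is a perfect difference set $D_p \subseteq \mathbb{Z}_{p^2+p+1}$ of size $p+1$. Viewing $D_p$ as a subset of $\{0,1,\ldots,p^2+p\}$ and embedding it into $\mathbb{Z}_M$, the set $D_p$ remains Sidon in $\mathbb{Z}_M$ whenever $2(p^2+p) < M$: if $d_1+d_4 \equiv d_2+d_3 \pmod{M}$ with $d_i \in D_p$, then both sides lie in $[0,\,2(p^2+p)] \subseteq [0,M-1]$ and so agree as integers, after which Singer's property forces $\{d_1,d_2\} = \{d_3,d_4\}$. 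Any $k$-element subset of $D_p$ is again Sidon, so it suffices to find a prime $p$ with $k-1 \leq p$ and $2(p^2+p) < M$.

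Writing $N = \lfloor M/2\rfloor + 1$, the condition $2(p^2+p) < M$ is equivalent (after handling the parity of $M$) to $p \leq P$ where $P = \tfrac12(-1 + \sqrt{4N-7}) = N^{1/2} - \tfrac12 + O(N^{-1/2})$. The Baker-Harman-Pintz theorem on primes in short intervals states that for sufficiently large $x$ there is a prime in $(x-x^{0.525}, x]$; applying this at $x = P$ produces a prime in $[P - P^{0.525},\, P] \supseteq [N^{1/2} - N^{0.2625} - O(1),\, N^{1/2} - O(1)]$. Consequently, for every $k$ with $k \leq N^{1/2} - N^{0.2625}$ and every sufficiently large $M$ there is a prime $p$ with $k-1 \leq p \leq P$, and Singer's construction combined with the bipartite Cayley recipe above delivers the required $k$-regular $C_4$-free bipartite graph on $M+M$ vertices.

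The substantive content of the argument is the appeal to Baker-Harman-Pintz: the exponent $0.2625 = 0.525/2$ in the lemma is exactly what their result forces after the substitution $x = \sqrt{N}$. Everything else is routine bookkeeping—verifying that the Sidon property survives the embedding $D_p \hookrightarrow \mathbb{Z}_M$ and absorbing the floor and parity constants in the relations between $M$, $N$, and $P$ into the slack $N^{0.2625}$ built into the hypothesis on $k$. I do not foresee a genuine conceptual obstacle beyond citing the prime-gaps input.
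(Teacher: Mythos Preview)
Your proposal is correct and follows essentially the same approach as the paper: embed a classical Sidon set into the first half of $\mathbb{Z}_M$ so that the Sidon property survives modulo $M$, build the bipartite Cayley-type graph on it, and invoke Baker--Harman--Pintz to supply a prime of the right size (which is exactly where the exponent $0.2625 = 0.525/2$ comes from). The only cosmetic differences are that the paper uses a Bose--Chowla Sidon set in $\mathbb{Z}_{p^2-1}$ rather than a Singer difference set in $\mathbb{Z}_{p^2+p+1}$, and defines adjacency by $x+y\in A$ rather than $b-a\in S$; neither change affects the argument.
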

\begin{proof}
By a result of Baker, Harman, and Pintz \cite{baker}, there is an $x_0$ such that for all $x > x_0$, 
the interval $[x - x^{0.525} , x]$ contains a prime.  We apply this result to obtain that 
for $M > n_0$, there is a prime $p$ with 
\begin{equation}\label{bipartite c4 lemma eq1}
\left( \lfloor M/2 \rfloor + 1 \right)^{1/2} - 
\left( \lfloor M/2 \rfloor +1 \right)^{0.2625} \leq p \leq \left( \lfloor M/2 \rfloor + 1 \right)^{1/2}.
\end{equation}
Let $\mathcal{A} \subset \mathbb{Z}_{p^2 - 1}$ be a Bose-Chowla Sidon set (see \cite{bc}).
Thus, $|\mathcal{A}| = p$, and for any 
$a_1,a_2,a_3,a_4 \in \mathcal{A}$, the equation
$a_1 + a_2 \equiv a_3 +a_4 ( \textup{mod}~p^2 - 1 )$
implies $\{a_1 , a_2 \} = \{ a_3 , a_4 \}$.  
Inequality (\ref{bipartite c4 lemma eq1}) implies that $p^2 - 1 \leq \lfloor M / 2 \rfloor$.  
Since $\mathcal{A} \subset \mathbb{Z}_{p^2 - 1} = \{1,2, \dots , p^2 - 1 \}$, we 
may view $\mathcal{A}$ as a subset of $\{1,2, \dots , \lfloor M / 2 \rfloor \}$.  
For any $k$ with $k \leq  \left( \lfloor M/2 \rfloor + 1 \right)^{1/2} - 
\left( \lfloor M/2 \rfloor +1 \right)^{0.2625}$, we may choose a subset $A \subseteq \mathcal{A}$ 
with $|A | = k$.  Define an $M \times M$ bipartite graph with parts $X = \mathbb{Z}_M$
and $Y = \mathbb{Z}_M$ where $x \in X$ is adjacent to $y \in Y$ if and only if 
$x + y \equiv a ( \textup{mod}~M)$ for some $a \in A$.  This bipartite graph 
has parts of size $M$, and is regular of degree $|A| = k$.  We finish the proof 
of the lemma by showing that this graph is $C_4$-free.

Suppose $x_1y_1x_2 y_2$ is a 4-cycle with $x_1,x_2 \in X$, $y_1 , y_2 \in Y$.  There are 
elements $a,b,c,d \in A$ such that 
\begin{center}
$x_1 + y_1 \equiv a ( \textup{mod}~M)$, ~~~~~~~~~~ $x_1 + y_2 \equiv b ( \textup{mod}~M)$, 

\smallskip

$x_2 + y_1 \equiv d ( \textup{mod}~M)$, ~~~~~~~~~~$x_2 + y_2 \equiv c ( \textup{mod}~M)$.
\end{center}
This system of congruences implies 
\[
a - b + c - d \equiv 0 ( \textup{mod}~M) ~~~ \Rightarrow ~~~
a + c \equiv b + d ( \textup{mod}~M).
\]
Recalling that $\mathcal{A}$, hence $A$, is contained in $\{1,2, \dots , \lfloor M / 2 \rfloor \}$, 
this last congruence can be turned into an equality in $\mathbb{Z}$ so
$a + c = b +d$.  Taking this equation modulo $p^2 - 1$ and using the fact 
that $A$ is a Sidon set gives $\{a ,c \} = \{ b , d \}$.  If $a \equiv b ( \textup{mod}~p^2 - 1 )$, then 
$a \equiv b ( \textup{mod}~M)$ which implies $y_1$ and $y_2$ are the same vertex.  A similar contradiction 
occurs if $a \equiv d ( \textup{mod}~M)$.  
\end{proof}

\begin{corollary}\label{corollary c4 even}
There is an $n_0$ such that for all $n>n_0$
with $n$ even, there is a $C_4$-free $n$-vertex graph that is $( \lfloor n/2 \rfloor +1)^{1/2} - ( \lfloor n/2 \rfloor +1)^{0.2625}$-regular.
\end{corollary}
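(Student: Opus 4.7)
Since the statement is essentially a reparametrization of Lemma \ref{bipartite c4 lemma}, my plan is to apply that lemma with $M$ chosen so that the bipartite graph it produces has exactly $n$ vertices. For an even integer $n$ with $n > 2n_0$ (where $n_0$ is the constant from Lemma \ref{bipartite c4 lemma}), I would set $M = n/2$, which is an integer satisfying $M > n_0$. The lemma then produces an $M \times M$ bipartite $C_4$-free graph on $2M = n$ vertices that can be taken to be $k$-regular for any integer $k$ with
\[
1 \leq k \leq \left(\lfloor M/2 \rfloor + 1\right)^{1/2} - \left(\lfloor M/2 \rfloor + 1\right)^{0.2625}.
\]
Choosing $k$ to be the largest admissible integer in this range and rewriting it in terms of $n$ via $M = n/2$ delivers the $n$-vertex $C_4$-free regular graph claimed by the corollary.

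The only step requiring any attention is bookkeeping: one must verify that the largest admissible $k$ obtained from the lemma, once $M = n/2$ is substituted, is at least the value stated in the corollary (possibly after absorbing a harmless floor/rounding adjustment). This is a routine algebraic check that uses monotonicity of $x \mapsto x^{1/2} - x^{0.2625}$ for large $x$ and the elementary inequality $\lfloor n/4 \rfloor + 1 \geq (\lfloor n/2 \rfloor + 1)/2$ to compare the two expressions. Since no genuinely new ideas are required — all the hard content, namely combining a Bose–Chowla Sidon set with the Baker–Harman–Pintz prime-gap estimate to obtain a $C_4$-free bipartite Cayley graph of prescribed regularity, was already executed in Lemma \ref{bipartite c4 lemma} — there is no real obstacle; the corollary is the lemma in the parlance of $n$-vertex graphs.
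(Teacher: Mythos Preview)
Your overall plan---set $M=n/2$ in Lemma \ref{bipartite c4 lemma} and read off the $n$-vertex bipartite graph---is exactly the argument the paper has in mind; the corollary is stated right after the lemma with no separate proof, so the intended derivation is precisely this substitution.

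However, the ``routine algebraic check'' you describe does not go through in the direction you claim. With $M=n/2$ the lemma yields regularity at most
\[
\bigl(\lfloor n/4 \rfloor +1\bigr)^{1/2}-\bigl(\lfloor n/4 \rfloor +1\bigr)^{0.2625},
\]
whereas the corollary asserts regularity $\bigl(\lfloor n/2 \rfloor +1\bigr)^{1/2}-\bigl(\lfloor n/2 \rfloor +1\bigr)^{0.2625}$. Since $x\mapsto x^{1/2}-x^{0.2625}$ is increasing for large $x$ (the $x^{-1/2}$ term in the derivative dominates the $x^{-0.7375}$ term), and $\lfloor n/4\rfloor+1<\lfloor n/2\rfloor+1$, the bound coming from the lemma is \emph{smaller} than the value stated in the corollary, not larger. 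Your proposed inequality $\lfloor n/4\rfloor+1\ge(\lfloor n/2\rfloor+1)/2$ combined with monotonicity cannot rescue this: monotonicity pushes the comparison the wrong way.

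What is really going on is that the corollary, as printed, appears to contain a typo: the expression should read $\lfloor n/4\rfloor$ rather than $\lfloor n/2\rfloor$ (note that in the proof of Theorem \ref{theorem c4 odd} the authors apply the lemma with parts of size $N/2$ and correctly obtain a regularity bound involving $\lfloor N/4\rfloor$). With that correction your argument is complete and identical to the paper's implicit one. If you wish to keep the statement literally as written you would need a genuinely different construction, and none is available from Lemma \ref{bipartite c4 lemma} alone.
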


For odd $n$, our lower bound will be obtained by taking the disjoint union 
of two $C_4$-free graphs.  One of these graphs is an induced subgraph
of the Erd\H{o}s-R\'{e}nyi orthogonal polarity graph
$ER_q$.  

Let $q$ be a power of an 
odd prime.  Parsons \cite{parsons} (see also \cite{williford}) showed that 
there is a $C_4$-free $\frac{q-1}{2}$-regular graph on $\binom{q+1}{2}$ vertices, and 
another on $\binom{q}{2}$ vertices.  We denote these graphs by $R_{1,q}$ and 
$R_{2,q}$, respectively. These are 
induced subgraphs of $ER_q$ and more 
on these subgraphs, and $ER_q$ in general, 
can be found in Williford's Ph.D.\ thesis \cite{williford}.  

\begin{theorem}\label{theorem c4 odd}
Let $0 < \epsilon < \frac{1}{100}$.  There is an 
$n_0  = n_0 ( \epsilon )$ such that the following holds.  
For all odd $n > n_0$,
there is a $C_4$-free $n$-vertex graph that is 
$\left( \sqrt{1/6} - \epsilon \right) n^{1/2}$-regular.
\end{theorem}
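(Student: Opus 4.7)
The plan is to build the desired graph as a vertex-disjoint union of two $C_4$-free pieces, both regular of a common degree $(q-1)/2$: one of the Parsons graphs $R_{1,q}$ or $R_{2,q}$ (contributing an odd number of vertices) and a bipartite $C_4$-free Cayley graph supplied by Lemma \ref{bipartite c4 lemma} (contributing an even number of vertices). We want the vertex counts to sum to $n$ and the degree to be about $\sqrt{n/6}$, which forces $q$ of order $\sqrt{2n/3}$; then the Parsons piece uses about $n/3$ vertices and the bipartite piece about $2n/3$, leaving comfortable room under the degree cap of Lemma \ref{bipartite c4 lemma}.

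First I would apply the Baker--Harman--Pintz prime-gap theorem (the same tool already invoked in the proof of Lemma \ref{bipartite c4 lemma}) to $x=\sqrt{2n/3}$, obtaining an odd prime $q$ with
\[
\sqrt{2n/3} - \left(\sqrt{2n/3}\right)^{0.525} \le q \le \sqrt{2n/3}
\]
for all sufficiently large $n$. Then $(q-1)/2 = \sqrt{n/6} - O(n^{0.2625})$, which exceeds $(\sqrt{1/6}-\epsilon)n^{1/2}$ once $n>n_0(\epsilon)$.

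Next comes the parity bookkeeping. A direct check with $q=4k+1$ or $q=4k+3$ shows that $|V(R_{1,q})|=\binom{q+1}{2}$ is odd exactly when $q\equiv 1\pmod 4$, while $|V(R_{2,q})|=\binom{q}{2}$ is odd exactly when $q\equiv 3\pmod 4$. So for whichever residue class $q$ falls in, I choose the corresponding Parsons graph $R$ with $v:=|V(R)|$ odd; since $n$ is odd, $M:=n-v$ is even, as required by Lemma \ref{bipartite c4 lemma}. Because $v\le \binom{q+1}{2}\le n/3+o(n)$, we get $M\ge 2n/3-o(n)$, and hence
\[
(\lfloor M/2\rfloor+1)^{1/2} - (\lfloor M/2\rfloor+1)^{0.2625} = \sqrt{n/3}\,(1-o(1)) > \sqrt{n/6}\,(1+o(1)) \ge (q-1)/2
\]
for $n$ large. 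So Lemma \ref{bipartite c4 lemma} applies with $k=(q-1)/2$ and produces a bipartite $C_4$-free $M$-vertex $(q-1)/2$-regular graph. The vertex-disjoint union of this graph with $R$ is $C_4$-free, has $n$ vertices, and is $(q-1)/2$-regular, which yields the stated bound.

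The only delicate point is the parity requirement on $M$: had we only one Parsons construction available, we would have to restrict $q$ to a fixed residue class mod $4$, calling for primes in arithmetic progressions in short intervals. Having both $R_{1,q}$ and $R_{2,q}$ with complementary parity behavior across $q\bmod 4$ lets us avoid this entirely, so Baker--Harman--Pintz in its raw form suffices.
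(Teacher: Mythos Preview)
Your proposal is correct and follows essentially the same approach as the paper: pick a prime $q\approx\sqrt{2n/3}$ via Baker--Harman--Pintz, use whichever of $R_{1,q}$ or $R_{2,q}$ has an odd number of vertices (determined by $q\bmod 4$), and fill the remaining even number of vertices with a bipartite $(q-1)/2$-regular $C_4$-free graph from Lemma~\ref{bipartite c4 lemma}. The paper's only cosmetic difference is that it writes the target as $\sqrt{(2/3-\epsilon)n}$ rather than $\sqrt{2n/3}$, but the argument and the parity bookkeeping are identical to yours.
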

\begin{proof}
Let $0 < \epsilon < \frac{1}{100}$ and let $\delta = \frac{2}{3} - \epsilon$.  Let $n$ be large enough so that there 
is a prime $p$ with
\begin{equation}\label{theorem c4 odd eq1}
\lfloor \sqrt{ \delta n} \rfloor - \lfloor \sqrt{ \delta n } \rfloor^{0.525} \leq p \leq 
\lfloor \sqrt{ \delta n } \rfloor .
\end{equation}
Define 
\[
N =
\left\{  
\begin{array}{ll}
n - \binom{p+1}{2} & \mbox{if $p \equiv 1 ( \textup{mod}~4)$}, \\
n - \binom{p}{2} & \mbox{if $p \equiv 3 ( \textup{mod}~4)$}.
\end{array}
\right.
\]
Since $n$ is odd, $N$ is even by definition and we let $N = 2M$.  We will now assume that $p \equiv 1 ( \textup{mod}~4)$
as the proof in the case when $p \equiv 3 ( \textup{mod}~4)$ is similar.  

The graph we construct will be the disjoint union of two graphs, one of which is 
a bipartite graph obtained from applying Lemma \ref{bipartite c4 lemma}.  The other is $R_{1,p}$, which 
has $\binom{p+1}{2}$ vertices and is $\frac{p-1}{2}$-regular.  
We wish to apply Lemma \ref{bipartite c4 lemma} to obtain a $N/2 \times N/2$ bipartite graph $B_1$ that has 
$N = n - \binom{p+1}{2}$ vertices and is $\frac{p-1}{2}$-regular.  To do so, we need 
\begin{equation}\label{theorem c4 odd eq2}
1 \leq \frac{p-1}{2} \leq \left( \lfloor N/4 \rfloor + 1 \right)^{1/2} 
- 
\left(  \lfloor N/4 \rfloor \right)^{ 0.2625}.
\end{equation}
By (\ref{theorem c4 odd eq1}), $p = (1 + o(1))   \sqrt{ ( 2/3 - \epsilon ) n }$.
By definition of $N$, 
\[
N= n - (1 + o(1)) \frac{p^2}{2} = n - (1 + o(1)) \frac{ (2/3 - \epsilon ) }{2} n
= 
\left( \frac{2}{3} + \frac{ \epsilon }{2}  +o(1) \right)  n.
\]
Thus, the right hand side of (\ref{theorem c4 odd eq2}) is 
\[
\left( \lfloor N/4 \rfloor + 1 \right)^{1/2} 
- 
\left(  \lfloor N/4 \rfloor \right)^{ 0.2625}=
(1 + o(1)) \frac{1}{2} \sqrt{( 2/3 + \epsilon /2 +o(1) ) n},
\]
which, with 
\[
\frac{p-1}{2} = (1 + o(1)) \frac{1}{2} \sqrt{ ( 2/3 - \epsilon) n},
\]
shows that (\ref{theorem c4 odd eq2}) holds for large enough $n$.  By Lemma \ref{bipartite c4 lemma}, 
there is a $\frac{p-1}{2}$-regular $N/2 \times N/2$ bipartite graph that is $C_4$-free. 
Taking the disjoint union of this bipartite graph and $R_{1,p}$ proves Theorem \ref{theorem c4 odd} in 
the case $p \equiv 1 ( \textup{mod}~4)$.  
\end{proof}


\section{The regular Tur\'{a}n number of $K_{2,t}$}\label{k2t section}

In this section we prove Theorem \ref{main theorem k2t}. Similar to the previous section, our strategy will be to use either a bipartite Cayley graph, or the disjoint union of a regular graph on an odd number of vertices and a bipartite Cayley graph. The first step is to construct a graph similar to the $K_{2,t+1}$-free graphs of F\"uredi \cite{F}.  While F\"uredi's constructions are algebraically defined graphs, ours will be written as Cayley sum graphs. We note that in \cite{mors} and \cite{zoli} bipartite $K_{2,t}$-free graphs of the same flavor are constructed.

Let $p$ be an odd prime and let $\theta$ be a generator of the multiplicative group $\mathbb{F}_p^*$.  Suppose 
$t \geq 1$ is an integer that divides $p-1$.  Let $\Gamma = \mathbb{Z}_{ (p-1)/t } \times \mathbb{F}_p$ 
and $\mu = \theta^{ \frac{ p -1}{t} }$.  
Define 
\[
S  = \{ ( m , \theta^m \mu^{n} ) : m \in \mathbb{Z}_{(p-1)/t} , 0 \leq n \leq t-1 \}.
\]
The set $S$ can also be written as 
\[
S = \left\{ \left( a \left( \textup{mod}~ \frac{p-1}{t} \right) , \theta^a ( \textup{mod}~p) \right) : a \in \mathbb{Z}_{p-1} \right\},
\]
where we use the least residues $\mathbb{Z}_{p-1} = \{0,1, \dots , p - 2 \}$.  

Let $H_{p , t, }$ be the graph with vertex set $\Gamma$, and distinct vertices
$(x,y)$ and $(a,b)$ are adjacent if 
\[
(x ,y) + (a,b) \in S.
\]
Thus, $(x,y)$ and $(a,b)$ are adjacent if and 
only if there is an $m \in \mathbb{Z}_{ (p-1)/t } $ and $n \in \{0,1, \dots , t - 1 \}$ 
such that 
\begin{center}
$x + a \equiv m ( \textup{mod}~ \frac{p-1}{t} )$ ~~~~~
and 
~~~
$y + b \equiv \theta^m \mu^n ( \textup{mod}~p)$.
\end{center}
This graph is a modification of a graph 
constructed by Ruzsa \cite{ruzsa}.  

\begin{lemma}\label{ruzsa k2t free}
The graph $H_{p,t}$ is $K_{2,t+1}$-free.
\end{lemma}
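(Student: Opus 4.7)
My plan is to show directly that any two distinct vertices have at most $t$ common neighbors. Fix $(x_1,y_1)\ne(x_2,y_2)$ and suppose $(a,b)$ is a common neighbor. The definition of $S$ translates the two adjacency conditions into
\[
x_i+a\equiv m_i\pmod{(p-1)/t},\qquad y_i+b\equiv \theta^{m_i}\mu^{k_i}\pmod p,
\]
for some $m_i\in\mathbb{Z}_{(p-1)/t}$ and $k_i\in\{0,1,\dots,t-1\}$. Subtracting the $x$-congruences gives $m_2-m_1\equiv x_2-x_1\pmod{(p-1)/t}$, and eliminating $b$ from the $y$-congruences gives
\[
\theta^{m_1}\mu^{k_1}-\theta^{m_2}\mu^{k_2}\equiv y_2-y_1\pmod p.
\]

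The key step is a change of variables that linearizes this condition. Set $g_i=\theta^{m_i}\mu^{k_i}\in\mathbb{F}_p^*$; using the unique decomposition of any exponent in $\{0,\dots,p-2\}$ as $m+k(p-1)/t$ with $m\in\mathbb{Z}_{(p-1)/t}$ and $k\in\{0,\dots,t-1\}$, the map $(m_i,k_i)\mapsto g_i$ is a bijection onto $\mathbb{F}_p^*$, and $g_1$ recovers the common neighbor uniquely via $a\equiv m_1-x_1\pmod{(p-1)/t}$ and $b=g_1-y_1$. With $\alpha:=\theta^{x_2-x_1}\in\mathbb{F}_p^*$ (any integer lift of $x_2-x_1$), the congruence $m_2-m_1\equiv x_2-x_1\pmod{(p-1)/t}$ is equivalent to $g_2\in g_1\alpha\langle\mu\rangle$. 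Writing $g_2=g_1\alpha\lambda$ with $\lambda\in\langle\mu\rangle$ and substituting into the displayed equation, common neighbors correspond bijectively to pairs $(g_1,\lambda)\in\mathbb{F}_p^*\times\langle\mu\rangle$ satisfying
\[
g_1(1-\alpha\lambda)=y_2-y_1.
\]

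Finally I would split into cases. If $y_1\ne y_2$, then for each of the $t$ elements $\lambda\in\langle\mu\rangle$, either $\alpha\lambda=1$ (no solution) or $g_1=(y_2-y_1)/(1-\alpha\lambda)$ is uniquely determined, so there are at most $t$ common neighbors. If $y_1=y_2$, then $x_1\ne x_2$ in $\mathbb{Z}_{(p-1)/t}$, so $\alpha=\theta^{x_2-x_1}$ lies outside $\langle\mu\rangle=\langle\theta^{(p-1)/t}\rangle$; the equation then forces $\alpha\lambda=1$ for some $\lambda\in\langle\mu\rangle$, which is impossible, so there are no common neighbors at all. In every case the number of common neighbors is at most $t$, so $H_{p,t}$ contains no $K_{2,t+1}$. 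I do not anticipate a real obstacle; the only point requiring care is the interplay between $\mathbb{Z}_{(p-1)/t}$-valued indices and their integer lifts when exponentiating $\theta$, which is handled cleanly by passing to cosets of $\langle\mu\rangle$.
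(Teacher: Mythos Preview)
Your proof is correct (modulo a harmless sign slip: from $g_i=y_i+b$ one gets $g_1-g_2=y_1-y_2$, not $y_2-y_1$, but this does not affect the case analysis). It is, however, a genuinely different argument from the paper's. The paper proceeds by contradiction: assuming $t+1$ common neighbors $(s_i,w_i)$, it writes the adjacency conditions with exponents $a_i,b_i\in\mathbb{Z}_{p-1}$, uses pigeonhole on the residues $\delta_i\pmod t$ arising from lifting $x-u\equiv a_i-b_i\pmod{(p-1)/t}$ to find indices $i\ne j$ with $\theta^{a_i-b_i}=\theta^{a_j-b_j}$, and then observes that $\{\theta^{a_i},\theta^{b_j}\}$ and $\{\theta^{a_j},\theta^{b_i}\}$ have the same sum and product, hence coincide as the root-set of a common quadratic in $\mathbb{F}_p[X]$, forcing two of the purported neighbors (or the two centers) to collapse. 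Your approach instead parametrizes the common neighbors directly: the bijection $(m,k)\mapsto\theta^m\mu^k$ onto $\mathbb{F}_p^*$ turns the pair of adjacency constraints into a single linear equation $g_1(1-\alpha\lambda)=y_1-y_2$ indexed by the $t$ elements $\lambda\in\langle\mu\rangle$, from which the bound $\le t$ is immediate. Your route is shorter and avoids both the pigeonhole step and the quadratic-factorization trick; as a bonus it shows that when $y_1=y_2$ there are in fact no common neighbors at all. The paper's argument, by contrast, tracks the individual exponents $a_i,b_i$ throughout, which stays closer in spirit to the Sidon/Ruzsa-style reasoning on which the construction is modeled.
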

\begin{proof}
Suppose $(x,y)$ and $(u,v)$ are two distinct vertices with 
$t+1$ common neighbors $(s_i , w_i )$, $1 \leq i \leq t + 1$.  
There are elements $a_i , b_i \in \mathbb{Z}_{p-1}$ such that 
\begin{center}
$x + s_i \equiv a_i ( \textup{mod}~ \frac{p-1}{t} )$, ~~~~~~~~~~
$y + w_i \equiv \theta^{a_i } ( \textup{mod}~p)$, \\
\smallskip
$u + s_i \equiv b_i ( \textup{mod}~ \frac{p-1}{t} )$, ~~~~~~~~~~
$v  + w_i \equiv \theta^{b_i} ( \textup{mod}~p)$.
\end{center}
Therefore, $x - u \equiv a_i - b_i ( \textup{mod}~ \frac{p-1}{t} )$ 
and $y - v \equiv \theta^{a_i} - \theta^{b_i} ( \textup{mod}~p)$.  
The first congruence implies that there is an integer $\delta_i$ such that 
$x - u = a_i - b_i + \delta_i \left( \frac{p-1}{t} \right)$ in $\mathbb{Z}$.  Hence,
\[
\theta^{ x - u } \equiv \theta^{ a_i - b_i + \delta_i ( (p-1)/t) } 
\equiv 
\theta^{a_i - b_i } \mu^{ \delta_i } ( \textup{mod}~p).
\]
The exponent $\delta_i$ may be taken modulo $t$ since $\mu = \theta^{ (p-1)/t}$, and 
so we let $\delta_i^* \equiv \delta_i ( \textup{mod}~t )$ where $\delta_i^* \in \{0,1, \dots , t- 1 \}$.
Since $i$ ranges from 1 to $t+1$, there exists $i,j$ with 
$1 \leq i < j \leq t + 1$ 
and $\delta_i^* = \delta_j^*$.  This gives
\[
\theta^{ a_i - b_i} \mu^{ \delta_i^*} \equiv \theta^{ a_j - b_j} \mu^{ \delta_j^*} ( \textup{mod}~p)
\]
so $\theta^{a_i - b_i} \equiv \theta^{a_j - b_j} ( \textup{mod}~p)$.  
Let $A = \theta^{a_i} \theta^{b_j} = \theta^{a_j} \theta^{b_i}$.  
Using the fact that 
$y - v \equiv \theta^{a_i} - \theta^{b_i} ( \textup{mod}~p)$, we let 
\begin{center}
$B = \theta^{a_i} + \theta^{b_j} \equiv \theta^{a_j} + \theta^{ b_i } ( \textup{mod}~p)$.
\end{center}
The pairs $\{ \theta^{a_i} , \theta^{b_j} \}$ and $\{ \theta^{a_j} , \theta^{b_i} \}$ 
are the roots of $X^2 - BX + A$ in $\mathbb{F}_p$.  By unique factorization in $\mathbb{F}_p[x]$, 
$\{ \theta^{a_i} , \theta^{b_j} \} = \{ \theta^{a_j} , \theta^{b_i} \}$.  
If $a_i \equiv a_j ( \textup{mod}~p)$ and $b_j \equiv b_i ( \textup{mod}~p)$, 
then the vertices $(s_i , w_i)$ and $(s_j , w_j)$ are the same, a contradiction.
If $a_i \equiv b_i ( \textup{mod}~p)$ 
and $b_j \equiv a_j ( \textup{mod}~p)$, then the vertices $(x,y)$ and $(u,v)$ are the same, 
another contradiction.  This shows $H_{p,t}$ is $K_{2,t+1}$-free.  
\end{proof}

\begin{lemma}\label{ruzsa absolute points}
The graph $H_{p,t}$ contains $p-1$ vertices of degree $p-2$, and all other vertices 
have degree $p-1$.
\end{lemma}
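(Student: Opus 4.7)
My plan is to exploit the Cayley-sum-graph structure of $H_{p,t}$ directly. For any vertex $v \in \Gamma$ and any $s \in S$, the equation $v + w = s$ has the unique solution $w = s - v$, and this $w$ is a genuine neighbor of $v$ precisely when $w \neq v$, i.e.\ when $2v \neq s$. Hence
\[
\deg(v) \;=\; |S| \;-\; \bigl|\{s \in S : s = 2v\}\bigr|.
\]
The alternative parametrization $S = \{(a \bmod \tfrac{p-1}{t},\ \theta^{a}) : a \in \mathbb{Z}_{p-1}\}$ already recorded in the paper shows that $|S| = p-1$ and that the second coordinates of elements of $S$ run bijectively over $\mathbb{F}_p^{*}$. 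In particular $|\{s \in S : s = 2v\}| \in \{0,1\}$, so every vertex has degree $p-1$ or $p-2$, and the lemma reduces to showing that exactly $p-1$ vertices $v$ satisfy $2v \in S$.

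Next I would identify these vertices explicitly. For $v = (x,y)$ to satisfy $2v \in S$ we need $2y \in \mathbb{F}_p^{*}$, hence $y \neq 0$. Given any $y \in \mathbb{F}_p^{*}$, let $a = a(y) \in \{0,1,\dots,p-2\}$ be the unique exponent with $2y \equiv \theta^{a} \pmod{p}$. The condition $2v \in S$ is then equivalent to the one-variable linear congruence
\[
2x \;\equiv\; a \pmod{\tfrac{p-1}{t}},
\]
so counting bad vertices reduces to counting solutions $x \in \mathbb{Z}_{(p-1)/t}$ of this congruence as $y$ varies over $\mathbb{F}_p^{*}$.

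The only place that requires care is the resulting parity case split on $(p-1)/t$. The congruence above has $\gcd(2, (p-1)/t)$ solutions when the gcd divides $a$ and none otherwise. If $(p-1)/t$ is odd, every $y \in \mathbb{F}_p^{*}$ yields exactly one $x$, giving $p-1$ bad vertices in total. If $(p-1)/t$ is even, only those $y$ with $a(y)$ even contribute, and each contributes two values of $x$; since $p-1$ is even and $a(y)$ runs bijectively over $\{0,\dots,p-2\}$ as $y$ runs over $\mathbb{F}_p^{*}$, exactly $(p-1)/2$ of the $a(y)$'s are even, and the total is again $\tfrac{p-1}{2}\cdot 2 = p-1$. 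In either case exactly $p-1$ vertices satisfy $2v \in S$ and hence have degree $p-2$, while all other vertices have degree $p-1$, which is the claim. There is no deeper obstacle beyond unwinding the Cayley-sum definition and handling this single parity split; a quick sanity check that degree sums match $2|E(H_{p,t})|$ could be included, but is not essential.
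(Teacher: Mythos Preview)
Your argument is correct and follows essentially the same approach as the paper: both observe that $\deg(v)$ equals $|S|$ minus the indicator of $2v \in S$, then count the vertices $v$ with $2v \in S$. The only difference is cosmetic --- the paper parametrizes the bad vertices directly as $(x,\, 2^{-1}\theta^{2x}\mu^{r})$ with $x \in \mathbb{Z}_{(p-1)/t}$ and $r \in \{0,\dots,t-1\}$, which sidesteps your parity split on $(p-1)/t$.
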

\begin{proof}
Let $(x,y)$ be a vertex in $H_{p,t}$.  Then, since $|S| = p-1$, the vertex $(x,y)$ has degree $p-1$ unless
\[
(x,y) + (x,y) = ( m , \theta^m \mu^n )
\]
for some $m \in \mathbb{Z}_{ (p-1)/t }$ and $n \in \{0,1, \dots , t-1 \}$.
From $x + x \equiv m ( \textup{mod} ~ \frac{p-1}{t} )$, we get $2x = m + \delta \left( \frac{p-1}{t} \right)$
for some integer $\delta$.  Then 
\[
2y \equiv \theta^{2x - \delta ( (p-1)/t ) } \mu^{n} \equiv \theta^{2x} \mu^{ n - \delta } ( \textup{mod}~p).
\]
Therefore, 
\[
(x,y) = (x , 2^{-1} \theta^{2x} \mu^{n - \delta} ).
\]
There  are $\frac{p-1}{t}$ choices for $x$ and $t$ choices for $n - \delta$ (this exponent can be taken 
modulo $t$ since $\mu = \theta^{ (p-1)/t}$) which gives $p-1$ vertices of degree $p-2$.  Conversely, one can check 
that any vertex of the form $(x, 2^{-1} \theta^{2x} \mu^r )$ with $r \in \{0,1, \dots , t - 1 \}$ 
will have degree $p-2$. 
\end{proof}

\bigskip

Following the standard terminology, vertices of degree $p-2$ in $H_{p,t}$ are called \emph{absolute points}.
Let $H_{p,t}^*$ be the supergraph of $H_{p,t}$ obtained by adding a new vertex $a$ that 
is adjacent to all of the absolute points 
of $H_{p,t}$.
By Lemma \ref{ruzsa absolute points}, the graph $H_{p,t}^*$ has $1 + \frac{ p (p-1)}{t}$ 
vertices and is $(p-1)$-regular.  We now show that $H_{p,t}^*$ is $K_{2 , 2t +1}$-free.

\begin{lemma}\label{star}
The graph $H_{p,t}^*$ is $K_{2,2t+1}$-free.
\end{lemma}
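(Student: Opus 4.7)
The plan is to reduce the $K_{2,2t+1}$-freeness of $H_{p,t}^*$ to two ingredients: the $K_{2,t+1}$-freeness of $H_{p,t}$ (Lemma~\ref{ruzsa k2t free}) and a new claim that every vertex of $H_{p,t}$ is adjacent to at most $2t$ absolute points.

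Suppose for contradiction that two distinct vertices $v_1,v_2$ of $H_{p,t}^*$ have at least $2t+1$ common neighbors. If both $v_1,v_2$ lie in $V(H_{p,t})$, every common neighbor in $H_{p,t}^*$ is either a common neighbor in $H_{p,t}$ or the new vertex $a$, so by Lemma~\ref{ruzsa k2t free} there are at most $t+1 \le 2t$ common neighbors, a contradiction for $t\ge 1$. We may therefore assume $v_1=a$ and $v_2=(u,w)\in V(H_{p,t})$; since the neighborhood of $a$ is exactly the set of absolute points, it suffices to prove the claim above.

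To prove the claim, observe that $P\in V(H_{p,t})$ is an absolute point adjacent to $v_2$ if and only if both $v_2+P\in S$ and $2P\in S$. Setting $t_1=v_2+P$ and $t_2=2P$, the map $P\mapsto t_1$ is an injection from the set of absolute points adjacent to $v_2$ into $\{t_1\in S:\,2t_1-2v_2\in S\}$, with $P=t_1-v_2$ and $t_2=2t_1-2v_2$. Writing a generic $t_1\in S$ as $t_1=(j\bmod (p-1)/t,\theta^j)$ for some $j\in\mathbb{Z}_{p-1}$ and setting $\beta=\theta^j$, the condition $2t_1-2v_2\in S$ translates into $\beta\ne w$ together with the multiplicative requirement that $(2\beta-2w)\theta^{2u}\beta^{-2}$ lie in the order-$t$ subgroup $G=\langle\mu\rangle$ of $\mathbb{F}_p^*$, since the condition ``discrete logs agree modulo $(p-1)/t$'' is exactly the condition that a ratio be a power of $\mu$.

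For each of the $t$ elements $g\in G$, equating $(2\beta-2w)\theta^{2u}\beta^{-2}=g$ gives the quadratic
\[
g\beta^2 - 2\theta^{2u}\beta + 2\theta^{2u}w = 0
\]
with at most two solutions in $\mathbb{F}_p$. Summing over $g\in G$ produces at most $2t$ admissible values of $\beta$, hence at most $2t$ valid $t_1$, and therefore at most $2t$ absolute points adjacent to $v_2$, as required. The main obstacle is this last step: the key observation is that the cyclotomic-type condition on the first coordinate of $S$ is exactly membership in the multiplicative subgroup $G$ of order $t$, which reduces the count to a union of $t$ quadratics rather than a single equation with many possible solutions.
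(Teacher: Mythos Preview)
Your proof is correct and follows essentially the same strategy as the paper's: first reduce to showing that every vertex of $H_{p,t}$ has at most $2t$ absolute-point neighbors, then establish this bound via a family of $t$ quadratic equations over $\mathbb{F}_p$. The only real difference is the parametrization of the count. The paper parametrizes the absolute point directly as $(z_i,\,2^{-1}\theta^{2z_i}\mu^{r_i})$ and obtains a quadratic in $\theta^{z_i}$ whose leading coefficient is indexed by $r_i\in\mathbb{Z}_t$; you instead parametrize by $t_1=v_2+P\in S$ (equivalently by $\beta=\theta^j$) and let the $t$ cases arise from which element $g$ of the order-$t$ subgroup $G=\langle\mu\rangle$ the ratio $(2\beta-2w)\theta^{2u}\beta^{-2}$ equals. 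Your change of variables has the minor advantage that the single parameter $g\in G$ is visibly the only source of the factor $t$, whereas in the paper's write-up one must also keep track of the auxiliary exponent $n_i-\delta_i$ appearing in the middle coefficient; but the two arguments are the same in substance.
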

\begin{proof}
By Lemma \ref{ruzsa k2t free}, any $K_{2,2t+1}$ in $H_{p,t}^*$ must use the added vertex $a$.  
We will show that in $H_{p,t}$, no vertex is adjacent to $2t +1$ absolute points and
so $H_{p,t}^*$ will be $K_{2,2t+1}$-free.

Suppose $(x,y)$ is a vertex in $H_{p,t}$ that is adjacent to 
absolute points $(z_i , 2^{-1} \theta^{2z_i} \mu^{r_i})$ 
for some $1 \leq i \leq D$,
where $z_i \in \mathbb{Z}_{ \frac{p-1}{t}}$ and 
$r_i \in \mathbb{Z}_t$.  
We must show $D \leq 2t$.    
Then we have
\[
(x , y) + ( z_i , 2^{-1} \theta^{ 2 z_i} \mu^{ r_i } ) = ( m_i , \theta^{m_i} \mu^{n_i} )
\]
for some $m_i \in \mathbb{Z}_{ \frac{p-1}{t}}$ and $n_i \in \mathbb{Z}_t$.    
Thus, $x+ z_i \equiv m_i ( \textup{mod} ~ \frac{p-1}{t} )$ so $x + z_i = m_i + \delta_i \left( \frac{p-1}{t} \right)$ 
for some integer $\delta_i$.  This last equation, with
\[
y + 2^{-1} \theta^{2 z_i} \mu^{r_i} - \theta^{m_i} \mu^{n_i} \equiv 0 ( \textup{mod}~p),
\]
implies 
\[
y + 2^{-1} \theta^{2 z_i} \mu^{r_i} - \theta^{x+z_i} \mu^{n_i - \delta_i} \equiv 0 ( \textup{mod}~p).
\]
Therefore, $\theta^{z_i}$ is a root of the degree 2 polynomial  
\[
f (X) = 2^{-1} \mu^{r_i} X^2 - \theta^x \mu^{n_i - \delta_i} X + y.
\]
 There are $t$ choices for $r_i$ and then at most 2 choices for $\theta^{z_i}$ since $f(X)$ has at most two roots.
Hence, $(x,y)$ is adjacent to at most $2t$ absolute points,
so $D \leq 2t$.  
\end{proof}

\begin{corollary}\label{star corollary}
Let $p$ be a prime and $t \geq 1$ be an integer for which $t$ divides $p-1$.  Then the graph 
$H_{p,t}^*$ is a $(p-1)$-regular graph with $\frac{p (p-1)}{t} + 1$ vertices and is
$K_{2,2t+1}$-free.
\end{corollary}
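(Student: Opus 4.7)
The plan is to assemble the corollary directly from the three preceding lemmas, since the only real content is bookkeeping about vertex counts and degrees after adjoining the extra vertex $a$.

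First I would count vertices. By construction $H_{p,t}$ has vertex set $\Gamma = \mathbb{Z}_{(p-1)/t} \times \mathbb{F}_p$, hence $|\Gamma| = \tfrac{p(p-1)}{t}$. Since $H_{p,t}^*$ is obtained by adding the single new vertex $a$, its vertex count is $\tfrac{p(p-1)}{t} + 1$, as claimed.

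Next I would verify $(p-1)$-regularity. By Lemma \ref{ruzsa absolute points}, every non-absolute vertex of $H_{p,t}$ already has degree $p-1$, and these degrees are unchanged in $H_{p,t}^*$ because $a$ is only joined to absolute points. Each of the $p-1$ absolute points had degree $p-2$ in $H_{p,t}$, and gains exactly one new edge (to $a$), raising its degree to $p-1$. Finally, $a$ is joined to all $p-1$ absolute points, so $\deg(a) = p-1$ as well. Thus every vertex of $H_{p,t}^*$ has degree $p-1$.

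For the $K_{2,2t+1}$-freeness I would simply quote Lemma \ref{star}, which handles exactly this statement. (Note that one does not need the stronger conclusion of Lemma \ref{ruzsa k2t free} here, although of course $K_{2,t+1}$-freeness of $H_{p,t}$ trivially implies $K_{2,2t+1}$-freeness of the subgraph $H_{p,t} \subseteq H_{p,t}^*$; the only way a copy of $K_{2,2t+1}$ could be created is by using $a$, and that is precisely what Lemma \ref{star} rules out.)

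There is no real obstacle here; the substantive work was done in Lemmas \ref{ruzsa k2t free}, \ref{ruzsa absolute points}, and \ref{star}. The proof of the corollary is thus a two- or three-line verification combining the vertex count of $\Gamma$, the degree bookkeeping from Lemma \ref{ruzsa absolute points}, and the $K_{2,2t+1}$-freeness from Lemma \ref{star}.
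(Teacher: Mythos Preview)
Your proposal is correct and matches the paper's approach exactly: the paper states the vertex count and $(p-1)$-regularity in the paragraph preceding Lemma~\ref{star} (via the same bookkeeping from Lemma~\ref{ruzsa absolute points} that you spell out), and then Lemma~\ref{star} supplies the $K_{2,2t+1}$-freeness, so the corollary is stated without further proof.
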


The next step is to prove a version of Lemma \ref{bipartite c4 lemma} for $K_{2,2t+1}$.  For this we use a set 
constructed in \cite{tt} that was used to solve a bipartite
Tur\'{a}n problem in $k$-partite graphs.  

Let $q$ be a power of an odd prime and suppose $t \geq 1$ is an integer for which $t$ divides $q -1$.  
Let $H$ be the subgroup of $\mathbb{Z}_{ (q^2 - 1)/t}$ generated by $\frac{ q-1}{t} ( q + 1) = \frac{q^2 - 1 }{t}$.
Assume $\theta$ is a generator of $\mathbb{F}_{q^2}^*$ and let 
$\mathcal{A} = \{ a \in \mathbb{Z}_{ q^2 - 1} : \theta^a - \theta \in \mathbb{F}_q \}$ be a Bose-Chowla Sidon set \cite{bc}.  
Let $\Gamma$ be the quotient group 
$\mathbb{Z}_{q^2 - 1} / H \cong \mathbb{Z}_{ (q^2 - 1)/t}$.  Finally, in the quotient 
group $\Gamma$, let $A$ be the set defined by 
\[
A   = \{ a +H : a \in \mathcal{A} \}.
\]
In \cite{tt} it is shown that $|A| = q$, and that 
for any nonzero $\alpha \in \Gamma$, the number of ordered pairs $(a,b) \in A \times A$ 
for which $\alpha = a - b $ in $\Gamma$ is at most $t$.  We state this as a lemma.  

\begin{lemma}[\cite{tt}]\label{k2t lemma}
If $t \geq 1$ and $q$ is a power of an odd prime with $q \equiv 1 ( \textup{mod}~t)$, 
then there is a set $A \subset \mathbb{Z}_{(q^2 - 1)/t}$ with $|A| = q$ such that for 
any $\alpha \in \mathbb{Z}_{(q^2 - 1)/t } \backslash \{ 0 \}$, 
there are at most $t$ ordered pairs $(a,b) \in A \times A$ such that \[
a-b \equiv \alpha \left( \textup{mod}~ \frac{q^2-1}{t} \right).
\]
\end{lemma}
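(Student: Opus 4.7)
The plan is to reduce both claims to the Bose--Chowla Sidon property of $\mathcal{A}$ in $\mathbb{Z}_{q^2-1}$ via the fact that the projection $\pi : \mathbb{Z}_{q^2-1} \to \Gamma \cong \mathbb{Z}_{(q^2-1)/t}$ has kernel $H$ of order exactly $t$. Recall that the Sidon property of $\mathcal{A}$ says that for any nonzero $\gamma \in \mathbb{Z}_{q^2-1}$ there is at most one ordered pair $(a,b) \in \mathcal{A} \times \mathcal{A}$ with $a - b \equiv \gamma \pmod{q^2-1}$.

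First I would show that $\pi$ is injective on $\mathcal{A}$, which yields $|A| = q$. Suppose $a_1, a_2 \in \mathcal{A}$ project to the same element of $\Gamma$; then $a_1 - a_2 \in H$, so $\eta := \theta^{a_1 - a_2}$ is a $t$-th root of unity. Since $t \mid q-1$, every $t$-th root of unity in $\mathbb{F}_{q^2}^*$ already lies in $\mathbb{F}_q^*$. By the defining property of $\mathcal{A}$ there exist $c_1, c_2 \in \mathbb{F}_q$ with $\theta^{a_i} = \theta + c_i$, and combining $\theta^{a_1} = \eta\,\theta^{a_2}$ with these gives $\theta + c_1 = \eta\theta + \eta c_2$. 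If $\eta \ne 1$, one could solve for $\theta = (\eta c_2 - c_1)/(1 - \eta) \in \mathbb{F}_q$, contradicting that $\theta$ generates $\mathbb{F}_{q^2}^*$. Hence $\eta = 1$ and $a_1 = a_2$ in $\mathbb{Z}_{q^2-1}$.

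Next I would count representations. Fix a nonzero $\alpha \in \Gamma$ and choose a lift $\tilde\alpha \in \mathbb{Z}_{q^2-1}$; the preimage $\pi^{-1}(\alpha) = \tilde\alpha + H$ consists of exactly $t$ elements, all of which are nonzero in $\mathbb{Z}_{q^2-1}$ since $\alpha \ne 0$. By the injectivity just established, every pair $(a,b) \in A \times A$ with $a - b \equiv \alpha \pmod{(q^2-1)/t}$ lifts uniquely to a pair $(\tilde a, \tilde b) \in \mathcal{A} \times \mathcal{A}$ whose difference in $\mathbb{Z}_{q^2-1}$ lies in $\tilde\alpha + H$. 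The Sidon property then contributes at most one ordered pair in $\mathcal{A} \times \mathcal{A}$ for each of the $t$ admissible differences, giving a total bound of $t$.

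The main subtlety is the injectivity argument, which exploits the specific algebraic shape of the Bose--Chowla set (that $\theta^a - \theta$ lands in the subfield $\mathbb{F}_q$) rather than just its Sidon property; this is exactly what lets us rule out the nontrivial $t$-th root of unity $\eta$. Once injectivity is secured, the representation count is purely combinatorial bookkeeping driven by $|H| = t$.
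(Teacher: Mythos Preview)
The paper does not supply its own proof of this lemma; it is quoted from \cite{tt} with only the construction of $A$ recalled beforehand. So there is nothing in the paper to compare your argument against.

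That said, your argument is correct and is the natural one. The crucial step is indeed the injectivity of $\pi|_{\mathcal{A}}$, and you have identified exactly why it works: the abstract Sidon property alone would not suffice (a Sidon set could in principle meet a nontrivial coset of $H$ in two points), but the explicit Bose--Chowla description $\theta^{a} = \theta + c$ with $c \in \mathbb{F}_q$, combined with the hypothesis $t \mid q-1$ forcing every $t$-th root of unity into $\mathbb{F}_q$, lets you derive $\theta \in \mathbb{F}_q$ from $\eta \neq 1$, a contradiction. Once injectivity is secured, the representation bound is exactly the bookkeeping you describe: each of the $t$ nonzero lifts of $\alpha$ contributes at most one ordered pair by the Sidon property of $\mathcal{A}$ in $\mathbb{Z}_{q^2-1}$.
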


\begin{lemma}\label{bipartite k2t lemma}
Let $t \geq 1$ be an integer and $\epsilon > 0$ be a positive real number.  
There is an $n_0 = n_0(t , \epsilon )$ such that the 
following holds.  For any $M > n_0$ and $k$ with 
\[
1 \leq k \leq  (1 - \epsilon ) \left(  \left\lfloor \frac{tM}{2} \right\rfloor +1 \right)^{1/2},
\]
there is a $k$-regular $M \times M$ bipartite graph that is $K_{2,2t+1}$-free 
\end{lemma}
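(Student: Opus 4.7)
The plan is to mirror the proof of Lemma~\ref{bipartite c4 lemma}, replacing the Bose--Chowla Sidon set by the $B_2[t]$-type set $A\subset \mathbb{Z}_{(q^2-1)/t}$ furnished by Lemma~\ref{k2t lemma}. First I would select a prime $q$ with $q\equiv 1~(\textup{mod}~t)$ lying in the window
\[
k \;\leq\; q \;\leq\; (1-\epsilon/2)\bigl(\lfloor tM/2 \rfloor + 1\bigr)^{1/2}.
\]
For large $M$ this window has length a constant fraction of its right endpoint, so the prime number theorem for arithmetic progressions (or any quantitative form that applies in intervals of this relative length) produces such a $q$ once $M$ is large in terms of $t$ and $\epsilon$. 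Squaring the upper bound gives $(q^2-1)/t < M/2$, so identifying $\mathbb{Z}_{(q^2-1)/t}$ with $\{0,1,\dots,(q^2-1)/t-1\}$ embeds $A$ inside $\{0,1,\dots,\lfloor M/2 \rfloor -1\}$.

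Next I would apply Lemma~\ref{k2t lemma} to produce $A\subset \mathbb{Z}_{(q^2-1)/t}$ with $|A|=q\geq k$ and fix a subset $A'\subseteq A$ with $|A'|=k$. Define the bipartite graph with parts $X=\mathbb{Z}_M$ and $Y=\mathbb{Z}_M$ by declaring $x\in X$ adjacent to $y\in Y$ if and only if $x+y\equiv a~(\textup{mod}~M)$ for some $a\in A'$. Each vertex has exactly $k$ neighbors, since the map $a\mapsto a-x$ is a bijection from $A'$ onto the neighborhood of $x$ (and symmetrically for $y$), so the graph is $k$-regular.

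To verify $K_{2,2t+1}$-freeness, I would suppose for contradiction that distinct vertices $x_1,x_2\in X$ share $t+1$ common neighbors $y_1,\dots,y_{t+1}\in Y$, and derive elements $a_i,b_i\in A'$ with $x_1+y_i\equiv a_i$ and $x_2+y_i\equiv b_i$ modulo $M$. Subtracting yields $a_i-b_i\equiv x_1-x_2~(\textup{mod}~M)$ independent of $i$. Because $a_i,b_i\in\{0,\dots,\lfloor M/2\rfloor-1\}$, each integer $a_i-b_i$ has absolute value strictly less than $M/2$, and no two distinct integers in $(-M/2,M/2)$ are congruent modulo $M$; therefore all $a_i-b_i$ coincide with a common integer $c$. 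Reducing modulo $(q^2-1)/t$, every pair $(a_i,b_i)\in A\times A$ satisfies $a_i-b_i\equiv c~(\textup{mod}~(q^2-1)/t)$, the residue is nonzero (otherwise $x_1=x_2$), and distinct $y_i$ force distinct ordered pairs $(a_i,b_i)$. Lemma~\ref{k2t lemma} then caps the number of such pairs at $t$, contradicting the existence of $t+1$ of them. In fact the argument shows the graph is $K_{2,t+1}$-free, which is strictly stronger than the stated conclusion.

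The main obstacle is the bookkeeping in the first step: the bipartite graph lives modulo $M$, whereas the difference-multiplicity bound in Lemma~\ref{k2t lemma} lives modulo $(q^2-1)/t$. To lift a congruence modulo $M$ to an equality of integers and then push it back into $\mathbb{Z}_{(q^2-1)/t}$, one must guarantee $(q^2-1)/t<M/2$ while simultaneously arranging $|A|=q\geq k$. These two constraints confine $q$ to an interval of length $\Theta_{\epsilon}(\sqrt{M})$, and the existence of a prime $q\equiv 1~(\textup{mod}~t)$ there is exactly what forces the factor $(1-\epsilon)$ in the statement of the lemma.
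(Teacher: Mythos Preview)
Your argument is correct and follows the same construction as the paper: pick a prime $q\equiv 1\pmod t$ of the right size, embed the set $A\subset\mathbb{Z}_{(q^2-1)/t}$ from Lemma~\ref{k2t lemma} into $\{0,\dots,\lfloor M/2\rfloor-1\}$, and form the bipartite Cayley sum graph on $\mathbb{Z}_M$. The paper chooses $q$ (called $p$ there) in the window $[(1-\epsilon)\sqrt{\lfloor tM/2\rfloor+1},\,\sqrt{\lfloor tM/2\rfloor+1}\,]$ via Siegel--Walfisz, while you use $[k,(1-\epsilon/2)\sqrt{\lfloor tM/2\rfloor+1}\,]$; both windows work for the same reason.

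Your verification step is in fact cleaner than the paper's and yields a stronger conclusion. The paper writes $x_1-x_2=a_i-b_i+\delta_i M$ with $\delta_i\in\{0,1\}$ and then pigeonholes on the value of $\delta_i$ among $2t+1$ indices to get $t+1$ with a common $\delta$, whence $K_{2,2t+1}$-freeness. You instead observe directly that all the integers $a_i-b_i$ lie in an interval of length less than $M$ and are mutually congruent modulo $M$, so they coincide; this bypasses the pigeonhole entirely and shows the graph is already $K_{2,t+1}$-free. The paper's extra case split is harmless but, as your argument reveals, unnecessary.
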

\begin{proof}
Let $t \geq 1$.  Choose a prime $p$ with $p \equiv 1 ( \textup{mod}~t)$ and 
\begin{equation}\label{another eq1}
(1 - \epsilon ) \left( \lfloor tM/2 \rfloor + 1 \right)^{1/2} 
\leq p 
<
 \left( \lfloor tM/2 \rfloor + 1 \right)^{1/2}.
\end{equation}
This can be done by Dirichlet's Theorem on primes in arithmetic progressions, and in 
particular the Siegel-Walfisz Theorem.  Indeed, the Siegel-Walfisz Theorem gives 
that for large enough $M$, the number of primes $p$ with $p \equiv 1 ( \textup{mod}~t)$ satisfying (\ref{another eq1}) is 
at least 
\[
\frac{  \epsilon ( tM/2)^{1/2} }{ 2 \phi (t) \ln ( tM / 2 ) }  - 
O \left(  \frac{ M^{1/2} }{ \ln^2 (M) } \right).
\]  
This is positive for large enough $M$ depending on $\epsilon$ and $t$.  
Here $\phi (t)$ is Euler's totient function.  
Let $A \subset \mathbb{Z}_{ (p^2 - 1)/t}$ be as in Lemma \ref{k2t lemma}.  By 
(\ref{another eq1}), we may view $A \subset \{1,2, \dots , \lfloor M / 2 \rfloor \}$ 
since $\frac{p^2 - 1}{t} < \lfloor \frac{M}{2} \rfloor$.  For any $k$ with 
\[
1 \leq k \leq (1 - \epsilon ) \left(  \lfloor tM /2 \rfloor +1 \right)^{1/2},
\]
we may choose a subset $A' \subseteq A$ with $|A'| = k$.  Thus, $A'$ is a $k$ element subset of 
$\mathbb{Z}_M$ that is contained in the ``first half" $\{1,2, \dots , \lfloor M / 2 \rfloor \}$ of $\mathbb{Z}_M$.  

Define an $M \times M$ bipartite graph with parts $X = \mathbb{Z}_M$ and $Y = \mathbb{Z}_M$ where
$x \in X$ is adjacent to $y \in Y$ if and only if 
\[
x + y \equiv a ( \textup{mod}~M)
\]
for some $a \in A'$.  This graph is $k$-regular.  We complete the proof by showing that it is $K_{2,2t+1}$-free.
Let $x_1, x_2$ be distinct vertices in $X$, say with $1 \leq x_2 < x_1 \leq M$, and suppose 
this pair of vertices is adjacent $2t+1$ vertices $y_1 , y_2 , \dots , y_{2t+1} \in Y$.  Then 
there are elements $a_i , b_i \in A'$ such that 
\begin{center}
$x_1 + y_i \equiv a_i ( \textup{mod}~M)$ ~~~~~ 
and ~~~~~
$x_2 + y_i \equiv b_i ( \textup{mod}~M)$
\end{center}
for $1 \leq i \leq 2t + 1$.    
Hence, 
\begin{equation}\label{another eq2}
x_1 - x_2 \equiv a_i - b_i ( \textup{mod}~M)\
\end{equation}
for each $i$.  Now 
$x_1 -x_2 \in \{1,2 , \dots , M-1 \}$, and since $A' \subset \{1,2, \dots , \lfloor M /2 \rfloor \}$, 
we know $- \lfloor M /2 \rfloor < a_i - b_i < \lfloor M /2 \rfloor$.  Thus, from (\ref{another eq2}) we get 
\[
x_1 - x_2 = a_i - b_i + \delta_i M
\]
where $\delta_i \in \{0,1 \}$.  If $\delta_i = 0$ for $t+1$ distinct $i$, say $1 \leq i \leq t + 1$, 
then $x_1 - x_2 = a_i - b_i$ (in $\mathbb{Z}$) which gives $x_1 -x_2 \equiv a_i - b_i ( \textup{mod}~ \frac{p^2 - 1}{t} )$.  
By our assumption on $A$, this forces $x_1 \equiv x_2 ( \textup{mod}~ \frac{p^2 - 1 }{t} )$ 
and so $a_i  = b_i$.  Combining this with (\ref{another eq2}) gives $x_1 \equiv x_2 ( \textup{mod}~M)$ 
which is a contradiction because $x_1 $ and $x_2$ are distinct vertices.  
Now assume $\delta_i = 1$ for $t+1$ distinct $i$, again say $1 \leq i \leq t + 1$.  This gives 
$x_1 - x_2 - M  = a_i - b_i$ (in $\mathbb{Z}$) and so 
$x_1 - x_2 - M \equiv a_i - b_i ( \textup{mod}~ \frac{p^2 - 1}{t} )$.  This 
congruence gives a similar contradiction as before.  
The conclusion is that this bipartite graph is indeed $K_{2,2t+1}$-free.  
\end{proof}  

\begin{corollary}\label{bipartite k2t corollary}
Let $t \geq 1$ be an integer and $\epsilon > 0$.
There is an $n_0 = n_0 ( t , \epsilon)$ such that the following holds.  For all even $n > n_0$, there is an
$n$-vertex $K_{2,2t+1}$-free graph that is $k$-regular 
where $k \geq ( 1 - \epsilon ) ( tn/4)^{1/2}$.
\end{corollary}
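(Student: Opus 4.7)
The plan is essentially a bookkeeping exercise: reduce directly to Lemma \ref{bipartite k2t lemma} by setting $M = n/2$, which is a positive integer because $n$ is assumed even. An $M \times M$ bipartite graph then has exactly $2M = n$ vertices, so all that remains is to match the degree bound in the lemma with the lower bound asserted in the corollary.

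To do this cleanly, I would invoke Lemma \ref{bipartite k2t lemma} with error parameter $\epsilon/2$ (not $\epsilon$) and threshold $n_0(t, \epsilon/2)$. For every $M > n_0(t,\epsilon/2)$ and every integer $k$ in the range
\[
1 \leq k \leq (1 - \epsilon/2)\bigl(\lfloor tM/2\rfloor + 1\bigr)^{1/2} = (1-\epsilon/2)\bigl(\lfloor tn/4\rfloor + 1\bigr)^{1/2},
\]
the lemma produces a $k$-regular $K_{2,2t+1}$-free bipartite graph on $n$ vertices. Now choose the specific value $k = \lceil (1-\epsilon)(tn/4)^{1/2}\rceil$. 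By construction this $k$ satisfies $k \geq (1-\epsilon)(tn/4)^{1/2}$, which is exactly the inequality the corollary demands.

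The only remaining point is to verify that this chosen $k$ actually lies in the lemma's allowable range. Since $k \leq (1-\epsilon)(tn/4)^{1/2} + 1$ and $(\lfloor tn/4\rfloor + 1)^{1/2} = (1+o(1))(tn/4)^{1/2}$ as $n \to \infty$, the right-hand side of the lemma's constraint is asymptotically $(1-\epsilon/2)(tn/4)^{1/2}$. The gap $(\epsilon/2)(tn/4)^{1/2}$ between $(1-\epsilon/2)(tn/4)^{1/2}$ and $(1-\epsilon)(tn/4)^{1/2}$ swallows the additive $+1$ once $n$ is sufficiently large in terms of $t$ and $\epsilon$. Taking $n_0(t,\epsilon)$ to be the maximum of $n_0(t,\epsilon/2)$ from Lemma \ref{bipartite k2t lemma} and the threshold past which this asymptotic comparison holds, we are done. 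There is no real obstacle here; the only minor care is choosing the intermediate error $\epsilon/2$ so that the floor and the $+1$ in the lemma's bound do not interfere with the $(1-\epsilon)$ constant in the corollary.
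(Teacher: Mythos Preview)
Your proposal is correct and matches the paper's intent: the corollary is stated without proof immediately after Lemma~\ref{bipartite k2t lemma}, as the obvious consequence obtained by setting $M = n/2$. Your extra care with $\epsilon/2$ to absorb the floor and ceiling effects is exactly the routine detail the paper leaves implicit.
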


The last result of this section deals with the case when 
$n$ is odd.  

\begin{theorem}
Let $t \geq 1$ be an even integer and let $\epsilon > 0$.  There is an $n_0 = n_0 ( t , \epsilon )$ such that 
for all odd $n \geq n_0$, there is a $k$-regular $n$-vertex $K_{2,2t+1}$-free graph with 
\[
k \geq (1  - 2 \epsilon )^{1/2} \sqrt{tn/5}.
\]
\end{theorem}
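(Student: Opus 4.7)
The plan is to mimic the argument for Theorem \ref{theorem c4 odd}: build the desired graph as a disjoint union of two regular $K_{2,2t+1}$-free components, one being the graph $H_{p,t}^*$ from Corollary \ref{star corollary} and the other a bipartite graph from Lemma \ref{bipartite k2t lemma}, both regular of the same degree $p-1$ and with vertex counts summing exactly to $n$.

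First, I would choose the prime $p$ as follows. By the Siegel--Walfisz theorem (as applied in the proof of Lemma \ref{bipartite k2t lemma}), for all sufficiently large $n$ there exists a prime $p \equiv 1 \pmod{2t}$ with
\[
(1 - 2\epsilon)^{1/2}\sqrt{tn/5} \leq p - 1 \leq (1-\epsilon)\sqrt{tn/5};
\]
the interval here has length $\Theta(\sqrt{n})$ and the modulus $2t$ is constant, so this is routine. The congruence $p \equiv 1 \pmod{2t}$ plays two roles: it ensures $t \mid p-1$, so that Corollary \ref{star corollary} applies, and it forces $(p-1)/t$ to be even, hence $p(p-1)/t$ even (since $p$ is odd), hence $N_1 := 1 + p(p-1)/t$ is odd. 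Because $n$ is odd, $N_2 := n - N_1$ is then even, which is what is needed to invoke Lemma \ref{bipartite k2t lemma} with $M := N_2/2$.

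Next, let $G_1 := H_{p,t}^*$, a $(p-1)$-regular $K_{2,2t+1}$-free graph on $N_1$ vertices by Corollary \ref{star corollary}. To produce a $(p-1)$-regular $K_{2,2t+1}$-free bipartite graph $G_2$ on parts of size $M$, I need
\[
p - 1 \leq (1-\epsilon)\bigl(\lfloor tM/2\rfloor + 1\bigr)^{1/2}.
\]
From $p-1 \leq (1-\epsilon)\sqrt{tn/5}$ we get $p(p-1)/t \leq (1-\epsilon)^2 n/5$, so $N_2 \geq n\bigl(1 - (1-\epsilon)^2/5\bigr) - O(1) > 4n/5$ for large $n$ (since $(1-\epsilon)^2 < 1$). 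Hence $(\lfloor tM/2\rfloor + 1)^{1/2} = (1+o(1))\sqrt{tN_2/4} > (1+o(1))\sqrt{tn/5}$, and the required inequality holds. Setting $G := G_1 \sqcup G_2$ then gives a $(p-1)$-regular $K_{2,2t+1}$-free graph on exactly $n$ vertices, with degree $k = p-1 \geq (1-2\epsilon)^{1/2}\sqrt{tn/5}$ by the choice of $p$.

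The main technical point is the parity bookkeeping---arranging a single arithmetic condition on $p$ that simultaneously makes $H_{p,t}^*$ well-defined, makes $N_1$ odd, and admits a prime in the desired short interval. The condition $p \equiv 1 \pmod{2t}$ achieves all three. Everything else is parameter tracking, with the threshold $\sqrt{tn/5}$ appearing precisely as the value that balances $N_1 \approx n/5$ and $N_2 \approx 4n/5$ against the bipartite degree bound $(tN_2/4)^{1/2}$.
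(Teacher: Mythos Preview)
Your proof is correct and follows essentially the same approach as the paper: take the disjoint union of $H_{p,t}^*$ with a bipartite graph from Lemma~\ref{bipartite k2t lemma}, choosing $p$ in a suitable residue class modulo $2t$ so that the parities work out. The only cosmetic difference is that you use $p \equiv 1 \pmod{2t}$ (making $(p-1)/t$ even and hence $N_1 = 1 + p(p-1)/t$ odd), whereas the paper uses $p \equiv 1 + t \pmod{2t}$ (making $p(p-1)/t$ itself odd); your bookkeeping with the extra vertex of $H_{p,t}^*$ is in fact slightly cleaner.
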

\begin{proof}
Let $t \geq 1$ be an even integer and write $t =2^r s$ where $r \geq 1$ and $s$ is odd.  Let $\epsilon  > 0$,   
$n > n_0$ be an odd integer, and $p$ be a prime with 
\[
(1 - 2 \epsilon )^{1/2} \sqrt{ tn /5} \leq p \leq (1 - \epsilon )^{1/2} \sqrt{ tn / 5}
\]
and
\[
 p \equiv 1 + 2^r s ( \textup{mod}~2^{r+1}s ).
\]
Such a prime exists by the Siegel-Walfisz Theorem (note $\textup{gcd}(1+2^r s , 2^{r+1} s) = 1$).  
Define $N$ by 
\[
n = \frac{p (p-1)}{t} + N.
\]
The assumption on $p$ implies that there is an integer $\alpha$ such that 
$ p -1 = 2^r s + \alpha 2^{r+1}s$.  Then 
\[
\frac{ p (p-1)}{t} = \frac{ p ( 2^r s + \alpha 2^{r+1} s ) }{ 2^r s } 
= 
p ( 1 + 2 \alpha )
\]
which is odd.  Therefore, $N$ is even, say $2M   = N$.  We now wish to apply Lemma \ref{bipartite k2t lemma} 
with $M = N/2$.  To do so, we will need 
\[
p-1 \leq ( 1 - \epsilon ) \left( \lfloor tN / 4 \rfloor + 1 \right)^{1/2}.
\]
Now 
\[
N = n - \frac{ p (p-1) }{t} \geq n - \frac{p^2}{t} \geq n  - ( 1 - \epsilon ) n /5 = \left( \frac{4}{5} + \epsilon \right)n.
\]
Thus, 
\begin{eqnarray*}
(1 - \epsilon ) \left( \lfloor tN/ 4 \rfloor + 1 \right)^{1/2} 
& \geq & (1 - \epsilon ) \left( tN / 4 \right)^{1/2} 
 \geq  (1 - \epsilon ) ( t / 4 ( 4/5 + \epsilon )n )^{1/2} \\
& = & ( 1 - \epsilon ) \left( 1/5 + \epsilon / 4 \right)^{1/2} ( tn )^{1/2} \\
&  \geq & \sqrt{ 1 - \epsilon } ( tn/5)^{1/2} > p-1
\end{eqnarray*}
where the second to last inequality follows since $\epsilon < \frac{1}{5}$, and the last inequality 
follows since $p -1 < \sqrt{1 - \epsilon } ( tn / 5)^{1/2}$.  
We apply Lemma \ref{bipartite k2t lemma} 
to obtain an $N/2 \times N/2$ bipartite graph that is $K_{2,2t+1}$-free and $(p-1)$-regular.  
Taking the disjoint union of this bipartite graph together with the graph $H_{p,t}^*$ from Corollary \ref{star corollary}
gives a $(p-1)$-regular graph on $n$ vertices that is $K_{2,2t+1}$-free.  
Finally, observe 
\[
p - 1 \geq \sqrt{ 1 - 2 \epsilon } (tn/5)^{1/2}.
\]
\end{proof}


\section{The regular Tur\'{a}n number of $K_{3,3}$}\label{k33 section}

In this section we prove Theorem \ref{main theorem k33}. The outline of the proof is to take several disjoint copies of regular $K_{3,3}$-free graphs constructed by Brown \cite{brown} and to remove a regular subgraph from each component so that the entire graph is regular. 

Let $p$ be an odd prime and write $\eta$ for the quadratic character on $\mathbb{F}_p$.
Brown \cite{brown} constructed $K_{3,3}$-free 
that
gave an asymptotically tight lower bound 
on the Tur\'{a}n number of $K_{3,3}$.  This graph,
which we denote by $B(p , \alpha )$, is 
defined as follows. 
For an odd prime $p$ and 
$\alpha \in \mathbb{F}_p \backslash \{ 0 \}$ satisfying 
$\eta ( \alpha ) = - \eta (-1)$,  
let $B(p, \alpha)$ be the graph 
with vertex set $\mathbb{F}_p^3$ where $(x,y,z)$ is adjacent to $(a,b,c)$ if 
\[
(x -a)^2  + (y-b)^2 + (z-c)^2 = \alpha.
\]

The graph $B(p, \alpha)$ is $(p^2-p)$-regular. Using Theorem \ref{prime basis theorem} gives that for sufficiently large $n$, 
there are primes $p_1, \dots , p_k$ where $k =13 $ if 
$n$ is odd, and $k = 14$ if $n$ is even, such that 
\[
n = \sum_{i=1}^{k} p_i^3 
~~~ \mbox{and}~~~
| p_j - (n/k)^{1/3} | \leq n^{4/15+\epsilon}
\]
 for $1 \leq j \leq k$.

We briefly remark that using a theorem of Kumchev and Liu \cite{kumchev} would give a better error term.

\begin{theorem}\label{theorem for k33}
Let $\epsilon>0$ be arbitrary. For all sufficiently large $n$, there is an $n$-vertex
$K_{3,3}$-free graph that is $k$-regular where 
$k \geq (n/13)^{2/3}-O(n^{3/5 + \epsilon})$ when $n$ is odd, 
and $k \geq (n/14)^{2/3} - O(n^{3/5 + \epsilon})$ when $n$ is even.  
\end{theorem}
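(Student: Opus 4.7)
The plan is to write $n$ as a sum of cubes of nearly equal primes, take the disjoint union of Brown's graphs $B(p_j,\alpha_j)$ on those primes, and then regularize by removing $2$-factors from each component until all components have the same degree.

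First I would apply Theorem \ref{prime basis theorem} with $k=3$. A direct check of the definitions shows $R(3)=2$: the only prime $p$ with $(p-1)\mid 3$ is $p=2$, with $\gamma(3,2)=1$. Since $2k(k-1)=12$, I may take $\ell=13$ when $n$ is odd and $\ell=14$ when $n$ is even, so that $n\equiv\ell\pmod{R(3)}$. For sufficiently large $n$ this produces primes $p_1,\dots,p_\ell$ with $n=\sum_{j=1}^{\ell} p_j^3$ and $|p_j - (n/\ell)^{1/3}| = O(n^{4/15+\epsilon})$. For each $j$, pick any $\alpha_j\in\mathbb{F}_{p_j}^*$ with $\eta(\alpha_j)=-\eta(-1)$; such an $\alpha_j$ exists because exactly half of $\mathbb{F}_{p_j}^*$ is a non-square. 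Then $B(p_j,\alpha_j)$ is $K_{3,3}$-free, $(p_j^2-p_j)$-regular, and has $p_j^3$ vertices, so $G := \bigsqcup_{j=1}^{\ell} B(p_j,\alpha_j)$ is a $K_{3,3}$-free graph on exactly $n$ vertices.

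To make $G$ regular, set $d=\min_j(p_j^2-p_j)$. Since each $p_j$ is odd, $p_j^2-p_j=p_j(p_j-1)$ is even, so each component is even-regular and Petersen's $2$-factor theorem decomposes its edge set into $2$-factors. Moreover $(p_j^2-p_j)-d$ is even for every $j$, so I can discard $\tfrac{1}{2}\bigl((p_j^2-p_j)-d\bigr)$ of the $2$-factors from component $j$. The resulting spanning subgraph of $G$ is $d$-regular on $n$ vertices and remains $K_{3,3}$-free, since deleting edges cannot create a $K_{3,3}$. Writing $p_{\min}\geq (n/\ell)^{1/3}-O(n^{4/15+\epsilon})$ then yields
\[
d = p_{\min}^2 - p_{\min} \geq (n/\ell)^{2/3} - O(n^{3/5+\epsilon}),
\]
which is the stated bound with $\ell\in\{13,14\}$ selected by the parity of $n$.

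There is no serious obstacle once the three ingredients (Theorem \ref{prime basis theorem}, Brown's graphs, and Petersen's theorem) are in place. The main point to watch is the parity bookkeeping: the value $R(3)=2$ is what forces $\ell$ to match the parity of $n$, and the evenness of each $p_j^2-p_j$ (guaranteed because $p_j$ is odd) is what makes every gap $(p_j^2-p_j)-d$ even, so that removing full $2$-factors actually realizes the common target degree $d$.
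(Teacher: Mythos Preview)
Your proposal is correct and follows essentially the same approach as the paper: write $n$ as a sum of $\ell\in\{13,14\}$ cubes of nearly equal primes via Theorem~\ref{prime basis theorem}, take the disjoint union of the corresponding Brown graphs, and use Petersen's $2$-factor theorem to trim each component down to the minimum degree. Your write-up is in fact slightly more explicit than the paper's in verifying $R(3)=2$, the parity constraint forcing the choice of $\ell$, and the evenness of each $p_j^2-p_j$ needed to apply Petersen.
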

\begin{proof}
First assume that $n$ is an odd integer large enough 
so that there are primes $p_1, \dots , p_{13}$ for which 
$n = \sum_{i=1}^{13}p_i^3$ and 
\[
| p_i - ( n / 13)^{1/3} | \leq n^{4/15 + \epsilon}
\]
for $1 \leq i \leq 13$.  We may assume that 
$p_1 \geq p_2 \geq \dots \geq p_{13}$.  
Let $G$ be the disjoint union 
of the Brown graphs $B(p_i , \alpha_i)$ for 
$1 \leq i \leq 13$ and some choice of 
$\alpha_i$.  Clearly $G$ is $K_{3,3}$-free and 
has $n$ vertices.  We now remove edges from $G$
to obtain a $(p^2_{13} - p_{13})$-regular graph.  This will complete the proof 
since 
\[
p_{13} \geq (n/13)^{1/3} - n^{4/15+\epsilon}
~~~ 
\Rightarrow
~~~ p_{13}^2 - p_{13} \geq \left( \frac{n}{13} \right)^{2/3}
- O(n^{3/5 + \epsilon}).
\]
Let $i \in \{1,2, \dots , 12 \}$.  
Consider the graph $B(p_i ,\alpha_i)$.  
For any $i$, 
\[
| p_i - p_{13} | \leq | p_1 - p_{13} | 
\leq | p_1 - (n/13)^{1/3} | + | (n/13)^{1/3} - p_{13} |
\leq 2n^{4/15 + \epsilon}.
\]
Hence, 
\begin{equation}\label{an inequality}
| (p_i^2 - p_i) - (p_{13}^2 - p_{13} ) | \leq 8 n^{9/15 + \epsilon}.
\end{equation}
Define $k_i$ by $2k_i = (p_i^2 - p_i) - (p_{13}^2 - p_{13})$
(observe $k_i$ is an integer since the right hand 
side is even).  Now each $B(p_i   , \alpha_i)$ is $(p_i^2 - p_i)$-regular so by Petersen's $2$-factor theorem
we may repeatedly remove 2-factors a total of $k_i$ times, and the 
result is that each component is $(p_{13}^2 - p_{13}$)-regular.   

The argument in the case when $n$ is even the same
with the exception that we must write 
$n$ as $\sum_{i=1}^{14} p_i^{3}$ instead of a sum
with 13 terms.  
\end{proof}


\section{The regular Tur\'an number of $K_{s,t}$ when $t>s!$}\label{kst section}
In this section, let $s$ and $t$ be fixed and $t > s!$. We will use the norm-graphs from \cite{KRS}, defined as follows. For $q$ a prime power and $a\in \mathbb{F}_{q^s}$, let $N(a)$ be the $\mathbb{F}_q$ norm of $a$, that is 
\[
N(a) = a\cdot a^q \cdot a^{q^2} \cdot \cdots \cdot a^{q^{s-1}} = a^{(q^s-1)/(q-1)} \in \mathbb{F}_q.
\]
The norm-graph has vertex set $\mathbb{F}_{q^s}$ and $a\sim b$ if $N(a+b) =1$. If $N(a+a) = 1$ we call $a$ an {\em absolute point}. Let $N_{q,s}$ be the norm-graph with the loops removed from the absolute points and $N_{q,s}^o$ be the norm-graph including the loops. The number of solutions in $\mathbb{F}_{q^s}$ to the equation $N(x)=1$ is $\frac{q^s-1}{q-1}$ (see \cite{lidl} or \cite{KRS}). Therefore, the graph $N_{q,s}^o$ is $\frac{q^s-1}{q-1}$-regular (counting loops as one neighbor), and the graph $N_{q,s}$ has $\frac{q^s-1}{q-1}$ vertices of degree $\frac{q^s-1}{q-1} - 1$ and $q^s - \frac{q^s-1}{q-1}$ vertices of degree $\frac{q^s-1}{q-1}$. In \cite{KRS}, it is shown that $N_{q,s}$ is $K_{s,s!+1}$-free (and hence $K_{s,t}$-free).

The outline of the proof of Theorem \ref{main theorem kst} is as follows. Let $n$ be fixed and sufficiently large, and we will construct a regular $K_{s,t}$-free graph with $\Omega\left(n^{2-1/s}\right)$ edges. We use Theorem \ref{prime basis theorem} to write $n$ as a sum of $s$'th powers of primes that are almost equal. We take a disjoint union of norm-graphs whose number of vertices is equal to the $s$'th powers of the primes. We use Theorem \ref{hamiltonian theorem} to remove edges from these graphs until most vertices have the same degree and the absolute points have degree $1$ fewer. Finally, we add a matching to the absolute points to make the graph regular while making sure that it remains $K_{s,t}$-free. We now proceed with the details.

Let $n$ be fixed. By Theorem \ref{prime basis theorem}, for $n$ sufficiently large, there is a constant $c_s$ which depends only on $s$ such that we may write 
\[
n = p_1^s + p_2^s + \cdots + p_\ell^s,
\]
where each $p_j$ is a prime satisfying $|p_j - (n/\ell)^{1/s}| \leq \left((n/\ell)^{1/s} \right)^{9/10}$ and $\ell \leq c_s$. Without loss of generality, assume that $p_1 \geq p_2 \geq \cdots \geq p_\ell$. Let $G_1$ be the graph on $n$ vertices which is the disjoint union of the norm-graphs $N_{p_i, s}$ for $1\leq i \leq \ell$. For brevity, call these components $N_1,\ldots, N_\ell$. Before we can use Theorem \ref{hamiltonian theorem} to equalize the degrees between components, we must show that the norm-graphs have a good spectral gap. 

\begin{theorem}\label{norm graph spectral gap theorem}
Let $\lambda_1 \geq \lambda_2 \geq \cdots \geq \lambda_{q^s}$ the the eigenvalues of the adjacency matrix of $N_{q,s}^o$. Then for $i>1$, we have 
\[
|\lambda_i| \leq \sqrt{q^s}.
\]
\end{theorem}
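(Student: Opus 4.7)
The plan is to exploit the observation that, although $N_{q,s}^o$ is not a Cayley graph on the additive group $(\mathbb{F}_{q^s}, +)$ --- adjacency is governed by $N(a+b)=1$ rather than $N(a-b)=1$ --- its \emph{square} is. Write $A$ for the adjacency matrix and
\[
S = \{x \in \mathbb{F}_{q^s} : N(x) = 1\},
\]
which is the kernel of the norm homomorphism $N : \mathbb{F}_{q^s}^* \to \mathbb{F}_q^*$ and hence a multiplicative subgroup of $\mathbb{F}_{q^s}^*$. The substitution $x = a+b$ gives
\[
(A^2)_{a,c} = \sum_{b \in \mathbb{F}_{q^s}} [N(a+b)=1]\,[N(b+c)=1] = |S \cap (S - (c-a))|,
\]
which depends only on $c-a$. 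Hence $A^2$ is translation-invariant on $(\mathbb{F}_{q^s},+)$, so the additive characters $\chi$ of $\mathbb{F}_{q^s}$ form a common eigenbasis of $A^2$.

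The next step is to identify the corresponding eigenvalues. Interchanging sums, the eigenvalue of $A^2$ attached to $\chi$ is
\[
\sum_{d \in \mathbb{F}_{q^s}} |S \cap (S - d)|\,\chi(d) = \sum_{x \in S}\sum_{y \in S} \chi(y - x) = \left| \sum_{x \in S} \chi(x) \right|^2.
\]
For the trivial character this is $|S|^2 = \left(\tfrac{q^s-1}{q-1}\right)^2$. For every non-trivial $\chi$, Lemma \ref{character sum lemma} applied in $\mathbb{F}_{q^s}$ with the multiplicative subgroup $S$ bounds the inner character sum by $\sqrt{q^s}$, so the corresponding eigenvalue of $A^2$ is at most $q^s$. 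An easy check shows $|S|^2 > q^s$ whenever $s \geq 2$, so the eigenvalue $|S|^2$ has multiplicity exactly one in the spectrum of $A^2$.

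Finally, I would translate this back to $A$. As multisets, $\{\lambda_i^2\}_{i=1}^{q^s}$ equals the spectrum of $A^2$. Since $N_{q,s}^o$ is $|S|$-regular, the all-ones vector is an eigenvector of $A$ with eigenvalue $|S|$, while Gershgorin's bound gives $|\lambda_1| \leq |S|$; combined, $\lambda_1 = |S|$. This lone $\lambda_1^2 = |S|^2$ accounts for the unique largest square in the spectrum of $A^2$, so every remaining eigenvalue satisfies $\lambda_i^2 \leq q^s$, i.e.\ $|\lambda_i| \leq \sqrt{q^s}$ for $i > 1$.

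The main conceptual step is noticing that the ``sum Cayley'' structure of $N_{q,s}^o$ linearizes into a genuine translation-invariant structure after squaring the adjacency matrix; once this is observed, the entire eigenvalue bound reduces to the classical character-sum estimate of Lemma \ref{character sum lemma}, and the only remaining work is the routine bookkeeping to isolate the Perron eigenvalue from the rest of the spectrum.
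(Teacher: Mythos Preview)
Your proof is correct, and it reaches the same endpoint as the paper --- the character-sum bound of Lemma~\ref{character sum lemma} applied to the multiplicative subgroup $S=\{x:N(x)=1\}$ --- but by a different route. The paper observes directly that $N_{q,s}^o$ is a Cayley \emph{sum} graph $\mathrm{CayS}((\mathbb{F}_{q^s},+),S)$ and then invokes the known fact (citing \cite{alon,fan,devos}) that the eigenvalues of a Cayley sum graph on an abelian group are exactly $\chi(S)$ for real characters and $\pm|\chi(S)|$ for complex ones; Lemma~\ref{character sum lemma} then finishes immediately. You instead square the adjacency matrix to convert the sum-Cayley structure into a genuine translation-invariant (difference-Cayley) structure, compute the eigenvalues of $A^2$ as $|\chi(S)|^2$, and then pull the bound back to $A$ using the multiplicity-one argument for the Perron eigenvalue. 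Your approach is more self-contained --- it avoids importing the Cayley sum graph spectral theorem --- at the cost of the extra bookkeeping step checking $|S|^2>q^s$ to isolate $\lambda_1$. In effect, the squaring trick is a hands-on rederivation of the relevant piece of the Cayley sum graph theory; both arguments are short and both bottom out in the same Gauss-sum estimate.
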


\begin{proof}
Given an abelian group $\Gamma$ and a subset $S\subset \Gamma$, we define the {\em Cayley sum graph} $\textup{CayS}(\Gamma, S)$ as the graph with vertex set $\Gamma$ and $u\sim v$ if and only if $u+v\in \Gamma$. Let $S_1 := \{a\in \mathbb{F}_{q^s}:N(a) = 1\}$ be the subset of $\mathbb{F}_{q^s}$ with norm $1$. Then the norm-graph $N_{q,s}^o$ can be written as the Cayley sum graph $\textup{CayS}((\mathbb{F}_{q^s}, +), S_1)$. The eigenvalues of Cayley sum graphs are given by character sums. Given a character $\chi$ of $\Gamma$, let 
\[
\chi(S) = \sum_{x\in S}\chi(x).
\]
Then all of the eigenvalues of $\textup{CayS}(\Gamma, S)$ are given by $\chi(S)$ (when $\chi$ is real valued) or $\pm |\chi(S)|$ (if $\chi$ is complex valued) as $\chi$ ranges over all of the additive characters of $\Gamma$ (see \cite{alon}, \cite{fan}, or \cite{devos}).

Note that $S_1$ is a multiplicative subgroup of $\mathbb{F}_{q^s}^*$. The largest eigenvalue of $N_{q,s}^o$ is $|S_1|$ and corresponds to the trivial additive character. The proof is complete after bounding the remaining eigenvalues by applying the Lemma \ref{character sum lemma}.

\end{proof}

We use Theorem \ref{norm graph spectral gap theorem} to show that norm-graphs have almost regular spanning subgraphs.

\begin{theorem}\label{norm graph subgraphs}
Let $\epsilon > 0$ and $k\in \mathbb{N}$. For $q$ sufficiently large, if $k< q^{s-1-\epsilon}$ then the norm-graph $N_{q, s}$ contains spanning subgraphs with each of the following degree sequences.
\begin{enumerate}[(a)]
\item\label{subgraph a}  $q^s - \frac{q^s-1}{q-1}$ vertices that have degree 
\[
\frac{q^s-1}{q-1} - 2k,
\]
and the remaining $\frac{q^s-1}{q-1}$ vertices have degree 
\[
\frac{q^s-1}{q-1} - 2k - 1.
\]

\item\label{subgraph b} $q^s - \frac{q^s-1}{q-1} + 1$ vertices of degree
\[
\frac{q^s-1}{q-1} - 2k+1,
\]
and the remaining $\frac{q^s-1}{q-1} - 1$ vertices of degree 
\[
\frac{q^s-1}{q-1} - 2k.
\]

\end{enumerate}
\end{theorem}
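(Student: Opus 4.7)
\noindent\emph{Proof plan.} Set $d = \frac{q^s-1}{q-1}$ and $n = q^s$, so that $N_{q,s}$ has $n - d$ non-absolute vertices of degree $d$ and $d$ absolute vertices of degree $d - 1$. (The construction below uses that $q$ is odd, which is the case of interest for Theorem \ref{main theorem kst}.) The plan is to realize both degree sequences by iteratively deleting Hamilton cycles from $N_{q,s}$ via Theorem \ref{hamiltonian theorem}; for part (b), the last cycle is replaced by a matching carved out of it.

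The first task is a spectral-gap bookkeeping step to guarantee that Hamilton cycles exist at every stage. By Theorem \ref{norm graph spectral gap theorem}, the nontrivial adjacency eigenvalues of $N_{q,s}^o$ are bounded by $\sqrt{q^s}$. Since $N_{q,s}$ differs from $N_{q,s}^o$ only in the loops (a diagonal perturbation of operator norm $1$) and each Hamilton cycle has adjacency operator norm $2$, Weyl's inequality gives that after $j \leq k$ Hamilton-cycle deletions the nontrivial adjacency eigenvalues are at most $\sqrt{q^s} + 1 + 2j$ in absolute value. The graph stays almost regular (all degrees within $1$ of the average $\bar d \sim q^{s-1}$), so its combinatorial Laplacian satisfies $|\mu_i - \bar d| \leq \sqrt{q^s} + 2k + O(1)$ for each $i \neq 0$. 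Because $s \geq 3$ (so $s/2 < s - 1$) and $k < q^{s-1-\epsilon}$, this upper bound is of order $q^{s-1}$ divided by a positive power of $q$, which beats any polylogarithmic factor in $n$. Hence the hypothesis of Theorem \ref{hamiltonian theorem} holds at every step.

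For part (a), I will iteratively delete $k$ Hamilton cycles from $N_{q,s}$. Each cycle lowers every vertex degree by exactly $2$, so the resulting spanning subgraph has non-absolute vertices of degree $d - 2k$ and absolute vertices of degree $d - 2k - 1$, as required. For part (b), I fix an absolute point $a^*$, iteratively delete $k - 1$ Hamilton cycles, and then let $C$ be a Hamilton cycle in the remaining graph. Deleting from $C$ the two edges incident to $a^*$ leaves a Hamilton path $P$ on the $n - 1$ vertices other than $a^*$; since $q$ is odd, $n - 1$ is even and $P$ admits a perfect matching $M$ given by every other edge along $P$. Removing $M$ decreases the degree of every vertex except $a^*$ by exactly one, so the $n - d$ non-absolute vertices together with $a^*$ end at degree $d - 2k + 1$ while the remaining $d - 1$ absolute vertices end at degree $d - 2k$, matching (b).

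The main obstacle is the spectral-gap bookkeeping in the second paragraph: one has to verify that iterated removal of Hamilton cycles does not erode the Laplacian gap required by Theorem \ref{hamiltonian theorem}. This combines the near-optimal spectral gap of $N_{q,s}$ with the hypothesis $k < q^{s-1-\epsilon}$, which together leave ample margin between $\sqrt{q^s}$ and $q^{s-1}$ once $s \geq 3$. The remaining combinatorics (peeling off cycles, then extracting a near-perfect matching from the final one) is essentially routine.
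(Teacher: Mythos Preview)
Your proposal is correct and follows essentially the same approach as the paper: iteratively delete Hamilton cycles using the Butler--Chung theorem, tracking the spectral gap via Weyl-type inequalities, and for part~(b) extract a near-perfect matching (missing one absolute point) from the final Hamilton cycle. The only noteworthy difference is that the paper observes directly that the combinatorial Laplacians of $N_{q,s}$ and $N_{q,s}^o$ coincide (removing a loop subtracts $1$ from both the degree and the diagonal adjacency entry, so $D-A$ is unchanged), which gives a slightly cleaner base case than your adjacency-matrix perturbation argument.
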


\begin{proof}
 We construct a sequence of graphs $G_0, G_1, \ldots, G_k$ and show that $G_k$ has the degree sequence that we require. Let $G_0 = N_{q,s}$ and note tht $G_0$ has $q^s-\frac{q^s-1}{q-1}$ vertices of degree $\frac{q^s-1}{q-1}$ and the remaining vertices (the absolute points) have degree $1$ fewer. If each $G_j$ is Hamiltonian, then remove a Hamilton cycle from it to create $G_{j+1}$ until we reach $G_{k-1}$. For part (\ref{subgraph a}), remove a Hamilton cycle from $G_{k-1}$ and for part (\ref{subgraph b}), remove a matching on $q^s-1$ vertices where
the one vertex not incident to an edge in the 
matching is an absolute point (this must exist if $G_{k-1}$ is Hamiltonian). Since $G_k$ has the correct degree sequence, it suffices to show that each $G_j$ is Hamiltonian.

We prove that each $G_j$ is Hamiltonian using Theorem \ref{hamiltonian theorem} and induction. Let $0=\mu_0(G_j)\leq \mu_1(G_j) \leq \cdots \leq \mu_{q^s-1}(G_j)$ be the eigenvalues of the combinatorial Laplacian of $G_i$. Let $d(G_j)$ be the average degree of $G_j$ and note that because $k< q^{s-1-\epsilon}$, we have $d(G_j) \sim q^{s-1}$. Therefore, if 
\begin{equation}\label{spectral gap equation}
|d(G_j) - \mu_i(G_j)| \leq q^{s/2} + 1 + 6j = O\left( q^{s-1-\epsilon}\right),
\end{equation}
for all $i\not=0$, then we may apply Theorem \ref{hamiltonian theorem} to conclude that $G_j$ is Hamiltonian. We prove \eqref{spectral gap equation} by induction. Let $i>0$ be fixed. When $j=0$, notice that the combinatorial Laplacians of $N_{q,s}$ and $N_{q,s}^o$ are in fact the same matrix. That is
\[D(G_0) - A(G_0) = D(N_{q,s}) - A(N_{q,s}) = D(N_{q,s}^o) - A(N_{q,s}^o) = \left(\frac{q^s-1}{q-1}\right) I - A(N_{q,s}^o).
\]

By Theorem \ref{norm graph spectral gap theorem}, this implies that we have 
\[
\left| \frac{q^s-1}{q-1} - \mu_i(N_{q,s})\right| \leq q^{s/2}.
\]
Since the average degree of $N_{q,s}$ is between $\frac{q^s-1}{q-1}-1$ and $\frac{q^s-1}{q-1}$, we have that $|d(G_0) - \mu_i(G_0)| \leq q^{s/2}+1$. Now assume that \eqref{spectral gap equation} holds for $G_{j-1}$. Note that $d(G_j) = d(G_{j-1}) - 2$ and \[
(D-A)(G_j) = (D-A)(G_{j-1}) - 2I + A(C_{q^s})
\]
where $C_{q^s}$ is a cycle on $q^s$ vertices. 
By the Courant-Weyl inequalities, we have that $|\mu_i(G_j) - \mu_i(G_{j-1})|$ is bounded above by the spectral radius of $2I - A(C_{q^s})$ which is less than $4$. By the triangle inequality,
\[
|d(G_j) - \mu_i(G_j)| \leq |d(G_{j-1}) - \mu_i(G_{j-1})| + 6.
\]
Applying the induction hypothesis completes the proof.
\end{proof}

We now apply Theorem \ref{norm graph subgraphs} to each component of $G_1$, using that the primes $p_j$ all satisfy $|p_j - (n/\ell)^{1/s}| \leq \left((n/\ell)^{1/s} \right)^{9/10}$. If $n$ or $s$ is even, then apply part (\ref{subgraph a})  to find a subgraph of $G_1$ so that all of the non-absolute points have degree $\frac{p_\ell^s-1}{p_\ell-1}$ and all of the absolute points have degree $\frac{p_\ell^s-1}{p_\ell-1} -1$. If both $n$ and $s$ are odd, then apply part (\ref{subgraph b}) so that in the $j$'th component of $G_1$, we have $p_j^s - \frac{p_j^s-1}{p_j-1} + 1$ vertices of degree $\frac{p_\ell^s - 1}{p_\ell-1} -1$ and the remaining vertices have degree $\frac{p_\ell^s - 1}{p_\ell -1} -2$. 

Call this graph $G_2$. In either case, the number of vertices of minimum degree in $G_2$ is even. All that remains is to ``fix" the vertices of minimum degree. To do this we will add a matching to the minimum degree vertices of $G_2$ such that each edge has one endpoint in $N_i$ and one endpoint in $N_j$ for some $i\not=j$. This is accomplished with the following lemma (see, for example, \cite{sitton}).

\begin{lemma}\label{multipartite matching lemma}
Let $n_1\geq \cdots \geq n_\ell$ be natural numbers satisfying with $\ell\geq 3$ and $n_1 < n_2 + \cdots + n_\ell$ and $n_1 + \cdots +n_\ell$ even. Then the complete multipartite graph $K_{n_1,\cdots, n_\ell}$ contains a perfect matching.
\end{lemma}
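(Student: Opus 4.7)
The plan is to verify Tutte's theorem: a graph $G$ has a perfect matching if and only if for every $U \subseteq V(G)$, the number of odd components of $G - U$ is at most $|U|$. The key observation, which makes the multipartite case essentially trivial, is that if $G = K_{n_1, \ldots, n_\ell}$ with parts $V_1, \ldots, V_\ell$ and $U_i = U \cap V_i$, then $G - U$ is itself the complete multipartite graph on the parts $V_i \setminus U_i$; in particular $G - U$ is connected as soon as at least two of these parts are nonempty.

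I would split the verification into two cases based on the number of nonempty $V_i \setminus U_i$. In the first case, at least two are nonempty, so $G - U$ is connected with $N - |U|$ vertices, where $N = n_1 + \cdots + n_\ell$ is even by hypothesis. Hence the lone component of $G - U$ has parity opposite to $|U|$ when we quotient by $N$, so either there are no odd components, or $|U| \geq 1$ and there is exactly one odd component; in both sub-cases Tutte's inequality holds. In the second case, at most one $V_i \setminus U_i$ is nonempty. If none are, $G - U$ is empty and there is nothing to check. Otherwise a unique index $i$ satisfies $V_j \subseteq U$ for all $j \neq i$, and $G - U$ consists of $n_i - |U_i|$ isolated vertices. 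Since $|U| = |U_i| + (N - n_i)$, Tutte's inequality in this subcase rearranges to $2n_i \leq N + 2|U_i|$, so it suffices to show $2n_i \leq N$.

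The only nontrivial point is this last inequality, and it is exactly what the hypothesis $n_1 < n_2 + \cdots + n_\ell$ is designed to deliver: that hypothesis is equivalent to $2n_1 < N$, and since $n_i \leq n_1$ we obtain $2n_i \leq 2n_1 < N$ for every $i$. I do not anticipate any real obstacle, since the two hypotheses in the lemma --- the balance condition $n_1 < n_2 + \cdots + n_\ell$ (ruling out a single dominant part) and the parity condition on $\sum n_i$ --- are precisely those demanded by Tutte's condition in the two cases above. An alternative and equally clean route would be induction on $N$: pair one vertex from $V_1$ with one vertex from $V_2$, reduce to a smaller instance, and treat the small base cases ($K_4$ and $K_{2,2,2}$) by hand; I would fall back on this only if a self-contained argument were preferred over citing Tutte.
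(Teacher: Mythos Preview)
Your Tutte-based argument is correct. The case split is the right one: when at least two parts survive in $G-U$ the graph $G-U$ is connected, and the parity of $N$ forces $o(G-U)\le |U|$; when at most one part survives, the isolated vertices from part $i$ give $o(G-U)=n_i-|U_i|$, and the required inequality $n_i-|U_i|\le (N-n_i)+|U_i|$ follows from $2n_i\le 2n_1< N$. One small wording point: in Case~1 you should note explicitly that if $|U|=0$ then $G-U=G$ has $N$ vertices, which is even, so $o(G-U)=0$; you essentially say this, but the phrasing ``either there are no odd components, or $|U|\ge 1$'' could be read as two parallel subcases rather than as the dichotomy forced by the parity of $|U|$.

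As for comparison with the paper: there is nothing to compare. The paper does not prove this lemma at all; it simply states it and cites Sitton \cite{sitton}. Your proof therefore supplies what the paper omits. The Tutte route you take is clean and self-contained; the alternative you mention (induction on $N$ by deleting a matched pair from the two largest parts) is the more common textbook argument and is equally short, but requires a little care to check that the balance condition $n_1' < n_2' + \cdots + n_\ell'$ is preserved after re-sorting. Either would be acceptable here.
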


Since we have ensured that the number of minimum degree vertices in $G_2$ is even and since the number of these vertices in each component is asymptotically equal, we may apply Lemma \ref{multipartite matching lemma} to add a matching to the minimum degree vertices of $G_2$ where each edge has endpoints in two components of $G_2$. Call this graph $G_3$ which is either $\frac{p_\ell^s - 1}{p_\ell -1}$ or $\left(\frac{p_\ell^s - 1}{p_\ell -1}-1\right)$-regular, depending on the parity of $n$ and $s$. Since $\ell$ is upper bounded by a constant depending only on $s$, we have that the degree of regularity is $\Omega(n^{1-1/s})$ where the implicit constant depends only on $s$.

The proof is complete once we show that $G_3$ is $K_{s,t}$-free. Since $G_2$ was $K_{s,t}$ free, any potential $K_{s,t}$ must contain an edge of the matching that we added. Let $uv$ be this edge and assume that $N_u$ and $N_v$ are the respective components that $u$ and $v$ are in in $G_2$. Assume that $u$ is in the part of the $K_{s,t}$ that has $s$ vertices. Then because we only added a matching to $G_2$, the remaining $t-1$ vertices in the part of size $t$ must belong to $N_u$ and the remaining $s-1$ vertices in the part of size $s$ must belong to $N_v$. Since $s-1$ and $t-1$ are at least $2$, and there is only a matching between components, this is a contradiction, and the $K_{s,t}$ cannot exist.

\section*{Acknowledgements}
The first author would like to thank Qing Xiang for teaching him about Gauss sums.  Both authors
thank Yair Caro for help in accurately 
presenting the evolution of regular Tur\'{a}n numbers.  

In an earlier version of this paper, the authors overlooked a significanly simpler argument 
and this was pointed out by Krivelevich \cite{K}.  Instead of using the fact that 2-factors exist in 
regular even degree graphs, a spectral approach similar to the proof of Theorem \ref{main theorem kst} was taken.  While
we still find the results obtained using that more complicated method (counting copies of $C_4$ in Brown's
$K_{3,3}$-free graph and then showing that it is an expander) interesting, we have removed this unnecessary work. Much thanks to Michael Krivelevich for
useful comments regarding our original proofs of Theorems \ref{main theorem k33} and 
\ref{main theorem kst}.


\end{document}